\newcommand{\Cl}{\mathrm{Cl}}
\newcommand{\id}{\mathrm{id}}
\newcommand{\pr}{\mathrm{pr}}
\newcommand{\M}{\mathcal M}
\newcommand{\A}{\mathcal A}
\newcommand{\B}{\mathcal B}
\newcommand{\N}{\mathbb N}
\newcommand{\R}{\mathbb R}
\newtheorem{theorem}{Theorem}[section]
\newtheorem{corollary}[theorem]{Corollary}
\newtheorem{lemma}[theorem]{Lemma}
\newtheorem{question}{Question}
\newtheorem*{theoremA}{Theorem A}
\theoremstyle{definition}
\newtheorem{df}[theorem]{Definition}
\title{Idempotent measures:absolute retracts and soft maps}
\author[T.~Radul]{Taras Radul}
\address[T.~Radul]{Kazimierz Wielki University, Bydgoszcz (Poland) and Ivan Franko National University of Lviv (Ukraine)}
\email{tarasradul@yahoo.co.uk}
\subjclass[2010]{52A30; 54C10; 28A33}
\keywords{Absolute retract; soft map; idempotent (Maslov) measure; idempotent barycenter map}
\begin{document}
\begin{abstract} We investigate under which conditions the space of idempotent measures is an absolute retract and  the idempotent barycenter map is soft.
\end{abstract}

\maketitle

\section{Introduction}

The notion of idempotent (Maslov) measure finds important applications in different
part of mathematics, mathematical physics and economics (see the survey article
\cite{Litv} and the bibliography therein). Topological and categorical properties of the functor of idempotent measures were studied in \cite{Zar}. Although idempotent measures are not additive and corresponding functionals are not linear, there are some parallels between topological properties of the functor of probability measures and the functor of idempotent measures (see for example \cite{Zar} and \cite{Radul}) which are based on existence of  natural equiconnectedness structure on both functors.

However, some differences appear when the problem of the openness of the barycentre map was studying.
The problem of the openness of the barycentre map of probability measures was investigated  in \cite{Fed}, \cite{Fed1},  \cite{Eif}, \cite{OBr} and \cite{Pap}. In particular, it is proved in \cite{OBr} that the barycentre map for a compact convex set  in a locally convex space is open iff the map $(x, y)\mapsto 1/2 (x + y)$ is open.
Zarichnyj defined in \cite{Zar} the idempotent barycentre map for idempotent measures and asked if the analogous characterization is true. A negative answer to this question was given  in  \cite{Radul1}.

We investigate the problem when the space of idempotent measures is absolute retract (shortly AR). It is shown in \cite{Zar} that the space of idempotent measures $I([0,1]^\tau)$ on Tychonov cube $[0,1]^\tau$ is not an absolute retract for any $\tau>\omega_1$. It follows from the results of \cite{Radul} that the space of idempotent measures $IX$  is  an absolute retract for each openly generated compactum $X$  of the weight $\le\omega_1$. We will show in this paper that the space of idempotent measures $IX$  is  an absolute retract iff $X$ is an openly generated compactum   of the weight $\le\omega_1$. Let us remark that it is an idempotent analogue of Ditor-Haydon Theorem for probability measures \cite{DH}.

The problem of the softness  of the barycentre map of probability measures was investigated  in \cite{Fed1}, \cite{Radul2} and \cite{Radul3}. Fedorchuk proved in \cite{Fed1} that each product of $\omega_1$ barycentrically open convex metrizable compacta (i.e.  convex metrizable compacta for which the barycentre map  is open) is barycentrically soft and asked two questions: if each barycentrically open convex compactum  of the weight $\le\omega_1$ is baricentrically soft and if there exists a baricentrically soft convex compactum  of the weight $\ge\omega_2$. The first question was answered in negative in \cite{Radul2}, showing that barycenrical softness of the space of probability measures $PX$ implies metrizability of the compactum $X$. The second question was answered in negative in \cite{Radul3}.

In this paper we discuss analogous problems for the space of idempotent measures and idempotent barycenter map.

\section{Idempotent measures: preliminaries}

In the sequel, all maps will be assumed to be continuous. Let $X$ be a compact Hausdorff space. We shall denote by $C(X)$ the
Banach space of continuous functions on $X$ endowed with the sup-norm. For any $c\in\ R$ we shall denote  by $c_X$ the
constant function on $X$ taking the value $c$.

Let $\R_{\max}=\R\cup\{-\infty\}$ be the metric space endowed with the metric $\varrho$ defined by $\varrho(x, y) = |e^x-e^y|$.
 Following the notation of idempotent mathematics (see e.g., \cite{MS}) we use the
notations $\oplus$ and $\odot$ in $\R$ as alternatives for $\max$ and $+$ respectively. The convention $-\infty\odot x=-\infty$ allows us to extend $\odot$ and $\oplus$  over $\R_{\max}$.

Max-Plus convex sets were introduced in \cite{Z}.
Let $\tau$ be a cardinal number. Given $x, y \in \R^\tau$ and $\lambda\in\R_{\max}$, we denote by $y\oplus x$ the coordinatewise
maximum of x and y and by $\lambda\odot x$ the vector obtained from $x$ by adding $\lambda$ to each of its coordinates. A subset $A$ in $\R^\tau$ is said to be  Max-Plus convex if $\alpha\odot a\oplus  b\in A$ for all $a, b\in A$ and $\alpha\in\R_{\max}$ with $\alpha\le 0$. It is easy to check that $A$  is   Max-Plus convex iff $\oplus_{i=1}^n\lambda_i\odot\delta_{x_i}\in A$ for all $x_1,\dots, x_n\in A$ and $\lambda_1,\dots,\lambda_n\in\R_{\max}$ such that $\oplus_{i=1}^n\lambda_i=0$. In the following by Max-Plus convex compactum we mean a Max-Plus convex compact subset of $\R^\tau$.

We denote by $\odot:\R\times C(X)\to C(X)$ the map acting by $(\lambda,\varphi)\mapsto \lambda_X+\varphi$, and by $\oplus:C(X)\times C(X)\to C(X)$ the map acting by $(\psi,\varphi)\mapsto \max\{\psi,\varphi\}$.

\begin{df}\cite{Zar} A functional $\mu: C(X) \to \R$ is called an idempotent  measure (a Maslov measure) if

\begin{enumerate}
\item $\mu(1_X)=1$;
\item $\mu(\lambda\odot\varphi)=\lambda\odot\mu(\varphi)$ for each $\lambda\in\R$ and $\varphi\in C(X)$;
\item $\mu(\psi\oplus\varphi)=\mu(\psi)\oplus\mu(\varphi)$ for each $\psi$, $\varphi\in C(X)$.
\end{enumerate}

\end{df}

Let $IX$ denote the set of all idempotent  measures on a compactum $X$. We consider
$IX$ as a subspace of $\R^{C(X)}$. It is shown in \cite{Zar} that $IX$ is a compact Max-Plus subset of $\R^{C(X)}$. The construction $I$ is  functorial what means that for each continuous map $f:X\to Y$ we can consider a continuous map $If:IX\to IY$ defined as follows $If(\mu)(\psi)=\mu(\psi\circ f)$ for $\mu\in IX$ and $\psi\in C(Y)$. It is proved in \cite{Zar} that the functor $I$ preserves topological embedding. For an embedding $i:A\to X$ we shall identify the space $F(A)$ and the subspace $F(i)(F(A))\subset F(X)$.

By $\delta_{x}$ we denote the Dirac measure supported by the point $x\in X$. We can consider a map $\delta X:X\to IX$ defined as $\delta X(x)=\delta_{x}$, $x\in X$. The map $\delta X$ is continuous, moreover it is an embedding \cite{Zar}. It is also shown in \cite{Zar} that the set $$I_\omega X=\{\oplus_{i=1}^n\lambda_i\odot\delta_{x_i}\mid\lambda_i\in\R_{\max},\ i\in\{1,\dots,n\},\ \oplus_{i=1}^n\lambda_i=0,\ x_i\in X,\ n\in\N\},$$ (i.e., the set of idempotent probability measures of finite support) is dense in $IX$.

Let $A\subset  \R^T$ be a compact max-plus convex subset. For each $t\in T$ we put $f_t=\pr_t|_A:A\to \R$ where $\pr_t:\R^T\to\R$ is the natural projection.    Given $\mu\in A$, the point $\beta_A(\mu)\in\R^T$ is defined by the conditions $\pr_t(\beta_A(\mu))=\mu(f_t)$ for each $t\in T$. It is shown in \cite{Zar} that $\beta_A(\mu)\in  A$ for each $\mu\in I(A)$ and the map $\beta_A : I(A)\to A$ is continuous.
The map $\beta_A$ is called the idempotent barycenter map.

For a function $\varphi\in C(X)$ by $\tilde\varphi\in C(IX)$ we denote the function defined by the formula $\tilde\varphi(\nu)=\nu(\varphi)$ for $\nu\in IX$. Diagonal product $(\tilde\varphi)_{\varphi\in C(X)}$ embeds $IX$ into $\R^{C(X)}$ as a Max-Plus convex subset. It is easy to  see that the map $\beta_{IX}$ satisfies the equality $\beta_{IX}(\M)(\varphi)=\M(\tilde\varphi)$ for any $\M\in I^2X=I(IX)$ and $\varphi\in C(X)$. Particularly we have $\beta_{IX}\circ I(\delta X)=\id_{IX}$ for each compactum $X$.

A map $f:X\to Y$ between Max-Plus convex compacta $X$ and $Y$ is called Max-Plus affine if for each  $a, b\in X$ and $\alpha\in[-\infty,0]$ we have $f(\alpha\odot a\oplus  b)=\alpha\odot f(a)\oplus  f(b)$. It is easy to check that the diagram
$$ \CD
IX  @>If>>   IY \\
@VV\beta_XV      @VV\beta_YV  \\
X  @>f>>   Y
\endCD
$$
is commutative provided $f$ is Max-Plus affine. It is also easy to check that the map $b_X$  is Max-Plus affine for each Max-Plus convex compactum $X$ and the map $If$ is Max-Plus affine for each continuous map $f:X\to Y$ between  compacta $X$ and $Y$.

The notion of density for an idempotent measure was introduced in \cite{A}. Let $\mu\in IX$. Then we can define a function $d_\mu:X\to [-\infty,0]$ by the formula $d_\mu(x)=\inf\{\mu(\varphi)|\varphi\in C(X)$ such that $\varphi\le 0$ and $\varphi(x)=0\}$, $x\in X$. The function $d_\mu$ is upper semicontinuous and is called the density of $\mu$. Conversely, each upper semicontinuous function $f:X\to [-\infty,0]$ with $\max f = 0$ determines an idempotent measure $\nu_f$
by the formula $\nu_f(\varphi) = \max\{f(x)\odot\varphi(x) | x \in X\}$, for $\varphi\in C(X)$.

Let $A$ be a closed subset of a compactum $X$.  It is easy to check that $\nu\in IA$ iff $\{x\in X|d_\nu(x)>-\infty\}\subset A$.

\begin{lemma}\label{supp} Let $A$ be a closed subset of a compactum $X$. Then  $\beta_{IX}^{-1}(IA)\subset I^2A$.
\end{lemma}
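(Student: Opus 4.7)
My plan is to argue through the density representation of idempotent measures. Using the support criterion from the preliminaries ($\rho \in IA$ iff $\{y : d_\rho(y) > -\infty\} \subset A$) applied both on $X$ and on $IX$, the hypothesis $\mathcal M \in \beta_{IX}^{-1}(IA)$ says that $\mu := \beta_{IX}(\mathcal M)$ has $d_\mu(x) = -\infty$ for every $x \in X \setminus A$, while the conclusion $\mathcal M \in I^2 A$ says that $d_{\mathcal M}(\nu) = -\infty$ for every $\nu \in IX \setminus IA$. I would prove the contrapositive: fix $\nu \in IX$ with $c := d_{\mathcal M}(\nu) > -\infty$ and show $\nu \in IA$.

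Suppose to the contrary that there exists $x_0 \in X \setminus A$ with $e := d_\nu(x_0) > -\infty$. I would derive a contradiction by showing $d_\mu(x_0) > -\infty$. To that end, take an arbitrary test function $\varphi \in C(X)$ with $\varphi \le 0$ and $\varphi(x_0) = 0$, and bound $\mu(\varphi)$ from below using two ingredients recorded in the preliminaries: the barycenter identity $\mu(\varphi) = \mathcal M(\tilde\varphi)$, and the representation $\rho(\psi) = \max_y\{d_\rho(y) + \psi(y)\}$ (which gives the pointwise inequality $\rho(\psi) \ge d_\rho(y_0) + \psi(y_0)$ for any single point $y_0$). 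One application at the level of $X$, to $\nu$ at $x_0$, gives $\nu(\varphi) \ge e + \varphi(x_0) = e$; a second application at the level of $IX$, to $\mathcal M$ at $\nu$ with test function $\tilde\varphi \in C(IX)$, gives $\mathcal M(\tilde\varphi) \ge c + \tilde\varphi(\nu) = c + \nu(\varphi) \ge c + e$. Chaining with the barycenter identity, $\mu(\varphi) \ge c + e$.

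Since the bound $c + e > -\infty$ does not depend on $\varphi$, passing to the infimum over admissible $\varphi$ yields $d_\mu(x_0) \ge c + e > -\infty$, contradicting $\mu \in IA$. I do not anticipate any substantial obstacle. The only housekeeping is verifying that $\tilde\varphi$ genuinely belongs to $C(IX)$ (which is exactly how $IX$ is embedded into $\mathbb R^{C(X)}$) and that $\tilde\varphi \le 0$ whenever $\varphi \le 0$ (axioms (1)--(3) force $\rho(0_X) = 0$ and thus monotonicity $\rho(\varphi) \le 0$). The whole argument is essentially a two-layer application of the max-plus density representation, glued by the barycenter formula.
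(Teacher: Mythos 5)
Your argument is correct and is essentially the paper's own proof: both rest on the two-layer density estimate $\mathcal M(\tilde\varphi)\ge d_{\mathcal M}(\nu)+\nu(\varphi)\ge d_{\mathcal M}(\nu)+d_\nu(x_0)+\varphi(x_0)$ chained through the identity $\beta_{IX}(\mathcal M)(\varphi)=\mathcal M(\tilde\varphi)$. The only cosmetic difference is that the paper extracts the contradiction from one suitably normalized Urysohn function vanishing on $A$, while you take the infimum over all admissible test functions to bound $d_{\beta_{IX}(\mathcal M)}(x_0)$ from below and contradict the support criterion.
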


\begin{proof} Suppose the contrary. Then there exists  $\M\in \beta_{IX}^{-1}(IA)\setminus I^2A$. Hence there exists  $\mu\in IX$ such that $d_\M(\mu)=s>-\infty$ and $d_\mu(x)=t>-\infty$ for some point $x\in X\setminus A$.
Choose a function $\varphi\in C(X)$ such that $\varphi(x)=1-s-t$ and $\varphi(A)\subset\{0\}$. Then we have $\M(\tilde\varphi)\ge\tilde\varphi(\mu)+s=\mu(\varphi)+s\ge\varphi(x)+t+s=1$. On the other hand $\beta_{IX}(\M)\in IA$ implies $\beta_{IX}(\M)(\varphi)=0$. But $\M(\tilde\varphi)=\beta_{IX}(\M)(\varphi)$ and we obtain a contradiction.
\end{proof}

\begin{lemma}\label{mapdenc}  Let $f:X\to Y$ be a continuous map,  $\nu\in IX$. Then  $d_{If(\nu)}(y)=\max\{d_\nu(x)|x\in f^{-1}(y)\}$ for each $y\in Y$.
\end{lemma}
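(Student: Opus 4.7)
The proof relies on the representation formula $\nu(\varphi) = \max_{x \in X}\{d_\nu(x) + \varphi(x)\}$ valid for every $\nu \in IX$ and $\varphi \in C(X)$. This converse direction of the density correspondence is implicit in the preamble (and is already used in the proof of Lemma~\ref{supp} via $\M(\tilde\varphi) \ge \tilde\varphi(\mu) + d_\M(\mu)$), so I would invoke it as a standard companion to the definition.

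One inequality is immediate. For any $x \in f^{-1}(y)$ and any admissible test function $\psi \in C(Y)$ (i.e.\ $\psi \le 0$ and $\psi(y) = 0$), the function $\psi \circ f$ is admissible for $d_\nu$ at $x$, so $If(\nu)(\psi) = \nu(\psi \circ f) \ge d_\nu(x)$. Taking the infimum over $\psi$ and then the supremum over $x$ gives $d_{If(\nu)}(y) \ge \max\{d_\nu(x) \mid x \in f^{-1}(y)\}$ (the max is attained since $d_\nu$ is upper semicontinuous and $f^{-1}(y)$ is compact).

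For the reverse inequality I would compute $If(\nu)(\psi)$ directly via the representation formula, grouping the maximum according to the fibres of $f$:
\[
If(\nu)(\psi) = \nu(\psi \circ f) = \max_{x \in X}\{d_\nu(x) + \psi(f(x))\} = \max_{y' \in Y}\{g(y') + \psi(y')\},
\]
where $g(y') := \max\{d_\nu(x) \mid x \in f^{-1}(y')\}$, with the convention $\max \emptyset = -\infty$. Granted that $g$ is upper semicontinuous on $Y$ with $\max g = 0$, this identifies $If(\nu)$ with the idempotent measure $\nu_g$ defined in the preamble, whence $d_{If(\nu)} = g$ by uniqueness of the density. The main technical point, and the only place where the topology enters essentially, is the upper semicontinuity of $g$: given $y \in Y$ and $c > g(y)$, the compact fibre $f^{-1}(y)$ lies inside the open set $\{d_\nu < c\}$, and since $f$ is a closed map between compacta there is a neighbourhood $V$ of $y$ with $f^{-1}(V) \subset \{d_\nu < c\}$, yielding $g < c$ on $V$. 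The equality $\max g = 0$ is immediate from $\max d_\nu = 0$ (itself obtained from the representation formula applied to $\varphi = 0_X$ together with $\nu(0_X) = 0$).
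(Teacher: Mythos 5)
Your proposal is correct and follows essentially the same route as the paper: the author likewise defines the fibrewise maximum $d(y)=\max\{d_\nu(x)\mid x\in f^{-1}(y)\}$, notes it is upper semicontinuous with maximum $0$, and identifies the measure it generates with $If(\nu)$ by evaluating both on test functions via the representation formula. Your first (one-sided) inequality is redundant given the second argument, and your explicit verification of upper semicontinuity fills in a detail the paper leaves as ``easy to see.''
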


\begin{proof}  Let $d:Y\to [-\infty,0]$ be a function defined by the formula $d(y)=\max\{d_\nu(x)|x\in f^{-1}(y)\}$ for  $y\in Y$. It is easy to see that the function $d$ is upper semicontinuous with  $\max d = 0$. Let $\mu$ be an idempotent measure generated by $d$. Then we have
$$\mu(\varphi)=\max\{d(y)+\varphi(y)|y\in Y\}=\max\{\varphi(y)+\max\{ d_\nu(x)|x\in f^{-1}(y)\}|y\in Y\}=$$
$$=\max\{\varphi\circ f(x)+ d_\nu(x)|x\in X\}=\nu(\varphi\circ f)=If(\nu)(\varphi)$$
for each $\varphi\in C(Y)$. Hence $\mu=If(\nu)$.
\end{proof}

\begin{lemma}\label{supp+}  Let $f:X\to Y$ be a continuous map, $A$ and $B$ are disjoint closed subsets of $Y$ and $\mu\in IX$  such that   $If(\mu)=s\odot\nu\oplus\pi$ where $\nu\in IA$ and $\pi\in IB$. Then there exist $\nu'\in I(f^{-1}(A))$ and $\pi'\in I(f^{-1}(B))$ such that $\mu=s\odot\nu'\oplus\pi'$.
\end{lemma}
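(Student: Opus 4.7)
The plan is to work entirely with densities, exploiting the fact that an idempotent measure is uniquely determined by its density and that the density of $s\odot\alpha\oplus\beta$ is $\max(s+d_\alpha,d_\beta)$. Combined with Lemma~\ref{mapdenc}, the hypothesis $If(\mu)=s\odot\nu\oplus\pi$ rewrites as the pointwise identity
$$\max\{d_\mu(x)\mid x\in f^{-1}(y)\}=\max\bigl(s+d_\nu(y),\,d_\pi(y)\bigr),\qquad y\in Y.$$
Since $\nu\in IA$, $\pi\in IB$ and $A\cap B=\emptyset$, the right-hand side equals $s+d_\nu(y)$ on $A$, equals $d_\pi(y)$ on $B$, and equals $-\infty$ off $A\cup B$. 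Two consequences jump out: $d_\mu\equiv-\infty$ on $X\setminus f^{-1}(A\cup B)$, and $d_\mu\le s$ on $f^{-1}(A)$.

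Guided by this, I would define $d_{\nu'}$ to equal $d_\mu-s$ on the closed set $f^{-1}(A)$ and $-\infty$ elsewhere, and $d_{\pi'}$ to equal $d_\mu$ on $f^{-1}(B)$ and $-\infty$ elsewhere. The checks that then have to be made are routine: both functions take values in $[-\infty,0]$ by the two consequences above; both are upper semicontinuous because each super-level set $\{d_{\nu'}\ge t\}$ equals $f^{-1}(A)\cap d_\mu^{-1}([t+s,0])$, an intersection of two closed sets (and similarly for $d_{\pi'}$); and each attains the value $0$, since
$$\max d_{\nu'}=\max_{y\in A}\bigl(d_{If(\mu)}(y)-s\bigr)=\max_{y\in A}d_\nu(y)=0$$
by Lemma~\ref{mapdenc} applied again. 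Moreover $\{d_{\nu'}>-\infty\}\subset f^{-1}(A)$ and $\{d_{\pi'}>-\infty\}\subset f^{-1}(B)$ by construction, so by the criterion recorded just before Lemma~\ref{supp} the idempotent measures $\nu'$ and $\pi'$ that these densities determine lie in $I(f^{-1}(A))$ and $I(f^{-1}(B))$ respectively.

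To finish, the density of $s\odot\nu'\oplus\pi'$ is $\max(s+d_{\nu'},d_{\pi'})$, and a case analysis over $x\in f^{-1}(A)$, $x\in f^{-1}(B)$, and $x\notin f^{-1}(A\cup B)$ shows that it coincides with $d_\mu$ on all of $X$; uniqueness of density then yields $\mu=s\odot\nu'\oplus\pi'$. The only step that deserves care is the upper semicontinuity of the two truncated densities, and the disjointness of $A,B$ (hence of $f^{-1}(A),f^{-1}(B)$) is precisely what prevents any seam between the two pieces of $d_{\nu'}$ (or $d_{\pi'}$) from producing an upward jump; this is really the only obstacle in what is otherwise a mechanical verification.
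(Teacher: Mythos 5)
Your proof is correct and takes essentially the same route as the paper: both arguments use Lemma~\ref{mapdenc} to translate the hypothesis into a statement about $d_\mu$, then truncate $d_\mu$ to $f^{-1}(A)$ (shifted by $-s$) and to $f^{-1}(B)$, and let $\nu'$ and $\pi'$ be the idempotent measures these densities generate. You merely spell out the verifications (upper semicontinuity, attainment of $0$, and the final identity of densities) that the paper leaves implicit.
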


\begin{proof} Consider the density $d_\mu$ of $\mu$. We have that $\max\{d_\mu(x)|x\in f^{-1}(A)\}=s$ and $\max\{d_\mu(x)|x\in f^{-1}(B)\}=1$ by Lemma \ref{mapdenc}. Consider functions $d_1,d_2:X\to [-\infty,0]$ defined by the formulas
$$d_1(x)=\begin{cases}
d_\mu(x)-c,&x\in A,\\
-\infty,&x\notin A\end{cases}
$$
and
$$d_2(x)=\begin{cases}
d_\mu(x),&x\in B,\\
-\infty,&x\notin B\end{cases}
$$
and idempotent measures $\nu'$ and $\pi'$ generated by function $d_1$ and $d_2$. Then $\nu'$ and $\pi'$ are the measures we are looking for.
\end{proof}

\section{Idempotent measures and absolute retracts}

By $w(X)$ we denote the weight of the space $X$ and by $\chi(X)$ the character of the space $X$.

We will need some notations and facts from the theory of
non-metrizable compacta. See \cite{Shchep} for more details.

Let $\tau$ be an infinite cardinal number. A partially ordered set $\A$ is
called $\tau$-{\it complete}, if every subset of cardinality $\le\tau$ has
a least upper bound in $\A$. An inverse system consisting of compacta and
surjective bonding maps over a $\tau$-complete indexing set is called
$\tau$-complete. A continuous $\tau$-complete system consisting of compacta
of weight $\le\tau$ is called a $\tau$-{\it system}.

As usual, by $\omega$ we denote the countable cardinal number, by $\omega_1$ we denote the first uncountable cardinal number and so on.

A compactum $X$ is called openly generated if $X$ can be represented as the
limit of an $\omega$-system with open bonding maps. We have $w(X)=\chi(X)$  for each openly generated compactum $X$ (see for example Lemma 4 from \cite{Radul4}). A compactum $X$ is called absolute extensor in the class of 0-dimensional compacta (shortly AE(0)) if for any 0-dimensional compactum $Z$, any closed subspace $A$ of $Z$ and a continuous map $\varphi:A\to X$  there
exists a continuous map $\Phi:Z\to X$ such that $\Phi|A=\varphi$. Evidently each absolute retract is AE(0). Let us also remark that  each  AE(0) is openly generated and these classes coincide  for compacta  of the weight $\le\omega_1$.

By $D$ we denote the two-point set with discrete topology.

\begin{lemma}\label{Domega} The compactum $I(D^\tau)$ is not an absolute retract for each $\tau\ge\omega_2$.
\end{lemma}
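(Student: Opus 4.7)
The plan is to proceed by contradiction, reducing to the result of \cite{Zar} that $I([0,1]^\tau)$ is not an absolute retract for $\tau > \omega_1$. The idea is to exhibit $I([0,1]^\tau)$ as a retract of $I(D^\tau)$: since a retract of an absolute retract is an absolute retract, this would give the desired contradiction.

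To build the retraction, I would start with a continuous surjection $q\colon D^\tau \to [0,1]^\tau$ obtained from the binary expansion $D^\omega \to [0,1]$ after grouping coordinates via the bijection $\tau \cong \omega \times \tau$ valid for infinite $\tau$. The functor $I$ then yields a continuous surjection $Iq\colon I(D^\tau) \to I([0,1]^\tau)$, and the aim is to construct a continuous section $s\colon I([0,1]^\tau) \to I(D^\tau)$ of $Iq$. A natural candidate comes from the density representation: assign to $\mu \in I([0,1]^\tau)$ the idempotent measure $s(\mu) \in I(D^\tau)$ with density $d_\mu \circ q$. This function is upper semicontinuous with maximum $0$ since $d_\mu$ is, so it does define an element of $I(D^\tau)$. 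By Lemma \ref{mapdenc}, the density of $Iq(s(\mu))$ at $y$ is $\max\{d_\mu(q(x)) : q(x) = y\} = d_\mu(y)$, hence $Iq \circ s = \id$ set-theoretically.

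The main obstacle is the continuity of $s$. For $\varphi \in C(D^\tau)$, one has $s(\mu)(\varphi) = \mu(T\varphi)$ where $T\varphi(y) = \max\{\varphi(x) : q(x) = y\}$. Because $q$ is not an open map -- the two dyadic expansions of a point like $1/2$ produce a fibre with two elements, causing $T\varphi$ to jump there -- the push-forward $T\varphi$ is only upper semicontinuous, and $\mu \mapsto \mu(T\varphi)$ need not be weakly continuous on $I([0,1]^\tau)$. To remedy this, I would attempt to refine the construction using Lemmas \ref{supp} and \ref{supp+}, which control the behaviour of idempotent measures along closed subsets and disjoint closed decompositions; the aim would be to glue continuous partial sections over pieces where $q$ is locally better behaved. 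If that fails, I would instead turn to a spectral-theoretic argument via Shchepin's characterization of AE(0) compacta, using the natural representation $D^\tau = \varprojlim\{D^A : A \in [\tau]^{\le \omega}\}$ together with Lemma \ref{supp} to derive a combinatorial obstruction to the existence of an $\omega$-system with open bonding maps realizing $I(D^\tau)$ when $\tau \ge \omega_2$.
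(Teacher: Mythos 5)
Your overall strategy is the same as the paper's: assume $I(D^\tau)$ is an AR, exhibit $I([0,1]^\tau)$ as a retract of it, and contradict Zarichnyi's theorem. You also correctly diagnose the central obstacle: the naive section of $Iq$ obtained by pulling back densities along a binary-expansion surjection $q\colon D^\tau\to[0,1]^\tau$ is not continuous, because $q$ is not open and the fibrewise maximum of a continuous function is only upper semicontinuous. But your proposal stops at this obstacle; neither of the two remedies you sketch is carried out, and the second one is provably hopeless. A ``combinatorial obstruction to the existence of an $\omega$-system with open bonding maps realizing $I(D^\tau)$'' cannot exist: $D^\tau$ is openly generated and the functor $I$ preserves open maps and weight, so $I(D^\tau)$ \emph{is} openly generated (this is exactly Corollary \ref{OG} of the paper, and indeed $I(D^\tau)$ is even AE(0)). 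The failure of $I(D^\tau)$ to be an AR for $\tau\ge\omega_2$ is a strictly finer phenomenon than failure of open generation, so no spectral argument of the kind you describe can detect it. The first remedy (gluing partial sections via Lemmas \ref{supp} and \ref{supp+}) is too vague to assess, and there is no indication that a continuous section of $Iq$ of density-pullback type exists at all.

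The paper closes the gap differently, and the difference is essential. By Theorem 2.1 of \cite{Radul} there is a surjection $f\colon D^\tau\to[0,1]^\tau$ admitting an idempotent Milutin section: a continuous map $s\colon[0,1]^\tau\to I(D^\tau)$ defined only on \emph{points} of the cube, satisfying $If\circ s=\delta[0,1]^\tau$. This is a quoted nontrivial result (the idempotent analogue of Milutin maps), not something one builds by hand from densities. The promotion from a pointwise section to a section of $If$ on all of $I([0,1]^\tau)$ is then free, coming from the monad structure rather than from any analysis of densities: the map $\beta_{I(D^\tau)}\circ Is$ is continuous and
$$If\circ\beta_{I(D^\tau)}\circ Is=\beta_{I([0,1]^\tau)}\circ I^2f\circ Is=\beta_{I([0,1]^\tau)}\circ I(\delta[0,1]^\tau)=\id_{I([0,1]^\tau)},$$
using the naturality of $\beta$ with respect to the Max-Plus affine map $If$ and the monad identity $\beta_{IX}\circ I(\delta X)=\id_{IX}$. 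Hence $If$ is a retraction (it has a continuous right inverse), $I([0,1]^\tau)$ would be an AR, and the contradiction follows. The missing idea in your proposal is precisely this two-step structure: import the Milutin section as a black box, then use $\beta$ and functoriality instead of trying to define the section measure-by-measure.
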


\begin{proof} Suppose the contrary: there exists $\tau\ge\omega_2$ such that $I(D^\tau)$ is  an absolute retract.
Choose a continuous onto map $f:D^\tau\to [0,1]^\tau$ such that   there exists a continuous map $s:[0,1]^\tau\to I(D^\tau)$ such that $If\circ s=\delta [0,1]^\tau$. Existence of such map follows from Theorem 2.1 \cite{Radul}.

Then we have $If\circ\beta_{I(D^\tau)}\circ Is=\beta_{I([0,1]^\tau)}\circ I^2f\circ Is=\beta_{I([0,1]^\tau)}\circ I(\delta [0,1]^\tau)=\id_{[0,1]^\tau}$. Hence the map $If:I(D^\tau)\to I([0,1]^\tau)$ is a retraction and the compactum $I([0,1]^\tau)$ is an absolute retract. We obtain a contradiction to the above mentioned Zarichnyi result.
\end{proof}

\begin{theorem}\label{Gen} The compactum $IX$ is an absolute retract iff $X$ is an openly generated compactum of the weight $\le\omega_1$.
\end{theorem}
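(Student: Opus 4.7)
The $(\Leftarrow)$ direction is the content of \cite{Radul} as recalled in the introduction, so I concentrate on $(\Rightarrow)$: assuming $IX$ is an AR, I must show $w(X)\le\omega_1$ and that $X$ is openly generated.

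\textbf{Weight bound.} My plan is proof by contradiction, modelled on Lemma~\ref{Domega}. Assume $w(X)=\tau\ge\omega_2$. First I would produce a continuous surjection $f:X\to[0,1]^\tau$; this is the technical heart, leveraging that $IX$ is AR (hence openly generated, hence dyadic) to extract such a map from $X$. Theorem~2.1 of \cite{Radul}, applied to $f$, then supplies a continuous section $s:[0,1]^\tau\to IX$ of $If$ lifting $\delta[0,1]^\tau$, i.e., $If\circ s=\delta[0,1]^\tau$. The closing computation is identical to that in Lemma~\ref{Domega}: using the naturality $If\circ\beta_{IX}=\beta_{I([0,1]^\tau)}\circ I^2f$ (which holds since $If$ is Max-Plus affine) together with $\beta_{I([0,1]^\tau)}\circ I(\delta[0,1]^\tau)=\id_{I([0,1]^\tau)}$, one gets $If\circ(\beta_{IX}\circ Is)=\id_{I([0,1]^\tau)}$. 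Thus $I([0,1]^\tau)$ is a retract of $IX$ and, being a retract of an AR, is itself an AR, contradicting Zarichnyj's theorem.

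\textbf{Open generation of $X$.} Since $IX$ is AR it is AE(0), hence openly generated. The plan is to pull this property back along $\delta X:X\to IX$ using the $\omega$-continuity of the functor $I$ (cf.~\cite{Zar}) together with the naturality of $\delta$: an open $\omega$-spectrum realizing $IX$ should, through the natural transformation $\delta$, induce an open $\omega$-spectrum realizing $X$.

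\textbf{Main obstacle.} The crux is producing the surjection $f:X\to[0,1]^\tau$ with $\tau=w(X)$, since a general compactum of weight $\ge\omega_2$ need not admit such a map. The two parts of the argument must therefore be coupled: one first uses $IX$ being AR to deduce open generation of $X$, which in turn (by the Haydon--Shchepin theory) implies $X$ is dyadic and so admits the required surjection onto the Tychonoff cube; only then can the retract argument be executed to contradict Zarichnyj's theorem.
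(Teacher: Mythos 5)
There is a genuine gap in the weight-bound step. You propose to take an arbitrary continuous surjection $f:X\to[0,1]^\tau$ (obtained from dyadicity of $X$) and then invoke Theorem~2.1 of \cite{Radul} to get a section $s:[0,1]^\tau\to IX$ with $If\circ s=\delta[0,1]^\tau$. But that theorem, as it is used in Lemma~\ref{Domega}, only asserts the \emph{existence} of some surjection $D^\tau\to[0,1]^\tau$ admitting such an $I$-valued section (an $I$-Milutin map); it does not say that every surjection onto $[0,1]^\tau$ — even one with AE(0) domain — is $I$-Milutin, and there is no reason the particular $f$ you produce should be. Without the section the closing computation never starts, and the retraction of $IX$ onto $I([0,1]^\tau)$ is not obtained. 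The paper goes in the opposite direction precisely to avoid this: from normality of $I$ and Shchepin's Corollary~4.2 it first gets that $X$ is AE(0) (which already yields open generation, settling your second step); then Haydon's results \cite{Hayd} give an \emph{embedding} $D^{\omega_2}\hookrightarrow X$, the results of \cite{BR} give a map $f:X\to I(D^{\omega_2})$ extending the Dirac embedding, and an explicit functional-level construction ($u:C(D^{\omega_2})\to C(X)$, $\phi:IX\to I(D^{\omega_2})$) turns this into a retraction of $IX$ onto $I(D^{\omega_2})$, contradicting Lemma~\ref{Domega} rather than Zarichnyi's cube theorem directly. The extendability of $\delta D^{\omega_2}$ over $X$ is exactly the ingredient your outward-pointing surjection cannot supply.

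A secondary weakness: your argument for open generation of $X$ ("pull back an open $\omega$-spectrum of $IX$ along $\delta X$") does not work as stated, since restricting open bonding maps to the closed subspace $\delta X(X)$ need not keep them open. The correct routes are either the one the paper takes in Theorem~\ref{Gen} (normality of $I$ plus Shchepin's Corollary~4.2 gives $X\in\mathrm{AE}(0)$, hence openly generated), or the spectral argument of Corollary~\ref{OG}, which needs Lemma~\ref{refl} (openness of $If$ implies openness of $f$) and preservation of preimages, not just naturality of $\delta$. This part is repairable; the missing $I$-Milutin section is the essential defect.
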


\begin{proof} The sufficiency follows from Corollary 3.5 \cite{Radul} and the fact that the functor of idempotent measures preserves weight of infinite compacta, open maps and preimages \cite{Zar}.

Let us prove the necessity. Consider any compactum $X$ such that the compactum $IX$ is an absolute retract. Since the functor $I$ is normal \cite{Zar}, the compactum $X$  is AE(0) (\cite{Shchep}, Corollary 4.2).   Let us show that $w(X)\le\omega_1$. Suppose the contrary $w(X)>\omega_1$, then by Theorem 5.6 and Proposition 6.3 from  \cite{Hayd}, there exists an embedding $s:D^{\omega_2}\to X$. It follows from results of \cite{BR} that there exists a continuous map $f:X\to I(D^{\omega_2})$ such that $Is(f(x))=\delta_x$ for each $x\in s(D^\tau)$. Since the map $Is$ is an embedding, we have $f\circ s=\delta D^{\omega_2}$.

Define a map $u:C(D^{\omega_2})\to C(X)$ by the formula $u(\varphi)(x)=f(x)(\varphi)$ for $\varphi\in C(D^{\omega_2})$ and $x\in X$. It is easy to check that $u$ is well-defined, continuous and preserves operations $\odot$, $\oplus$ and constant functions. The equality $f\circ s=\delta D^{\omega_2}$ implies $u(\varphi)\circ s=\varphi$.

Define a map $\phi:IX\to I(D^{\omega_2})$ by the formula $\phi(\nu)(\varphi)=\nu(u(\varphi))$ for $\varphi\in C(D^{\omega_2})$ and $\nu\in IX$. Since $u$  preserves operations $\odot$, $\oplus$ and constant functions, $\phi(\nu)\in I(D^{\omega_2})$ for each $\nu\in IX$. It is easy check that $\phi$ is continuous.

Finally, for each $\varphi\in C(D^{\omega_2})$ and $\nu\in I(D^{\omega_2})$ we have $(\phi\circ Is)(\nu)(\varphi)=Is(\nu)(u(\varphi))=\nu(u(\varphi)\circ s)=\nu(\varphi)$. Hence the map $\phi$ is a retraction and $ I(D^{\omega_2}$ is an absolute retract. We obtain a contradiction to Lemma \ref{Domega}.

\end{proof}

\section{On the  softness of the idempotent barycenter map}

A map $f:X\to Y$ is said to be (0-)soft if for any (0-dimensional) paracompact space $Z$, any closed subspace $A$ of $Z$ and
maps $\Phi:A\to X$ and $\Psi:Z\to Y$ with $\Psi|A=f\circ\Phi$ there exists a
map $G:Z\to X$ such that $G|A=\Phi$ and $\Psi=f\circ G$. This notion is
introduced by E.Shchepin \cite{Shchep1}. Let us remark that each 0-soft map is open and 0-softness is equivalent to the openness for all the maps between metrizable compacta.

Let $$ \CD
X_1  @>p>>   X_2 \\
@VVf_1V      @VVf_2V  \\
Y_1  @>q>>   Y_2
\endCD
$$
be a commutative diagram. The map $\chi:X_1\to
X_2\times{}_{Y_2}Y_1= \{(x,y)\in X_2\times Y_1\mid f_2(x)=q(y)\}$
defined by $\chi(x)=(p(x),f_1(x))$ is called a characteristic map
of this diagram. The diagram is called open (0-soft, soft)  if the map $\chi$ is open (0-soft, soft).

The following theorem from \cite{Zar3} gives a
characterization of 0-soft maps:

\begin{theoremA}\cite{Zar3} A map $f:X\to Y$ is
0-soft if and only if there exist $\omega$-systems $S_X$ and $S_Y$ with the
limits $X$ and $Y$ respectively and a morphism $\{f_\alpha\}:S_X\to S_Y$ with
the limit $f$ such that 1) $f_\alpha$ is 0-soft for every $\alpha$; 2) every
limit square diagram is 0-soft.
\end{theoremA}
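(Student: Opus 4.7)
The plan is to argue both directions using Shchepin's spectral methodology.

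For sufficiency, assume $f$ is realized as the limit of a morphism $\{f_\alpha:X_\alpha\to Y_\alpha\}$ of $\omega$-systems with each $f_\alpha$ 0-soft and every limit square 0-soft. Suppose we are given the lifting data: a 0-dimensional paracompact space $Z$, a closed subspace $A\subset Z$, and maps $\Phi:A\to X$, $\Psi:Z\to Y$ satisfying $\Psi|A=f\circ\Phi$. I would first push the problem down to some coordinate $\alpha$: using 0-softness of $f_\alpha$ applied to the induced problem $(p_\alpha\circ\Phi,q_\alpha\circ\Psi)$, obtain a map $G_\alpha:Z\to X_\alpha$ extending $p_\alpha\circ\Phi$ and satisfying $f_\alpha\circ G_\alpha=q_\alpha\circ\Psi$. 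The pair $(G_\alpha,\Psi)$ then assembles into a map $Z\to X_\alpha\times_{Y_\alpha}Y$ agreeing on $A$ with the characteristic map of the limit square at $\alpha$ applied to $\Phi$. 0-softness of that characteristic map, which is precisely what \emph{0-softness of the limit square} provides, yields the required lift $G:Z\to X$ extending $\Phi$ with $f\circ G=\Psi$.

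For necessity, start from any spectral representation of $f$ as the limit of a morphism of $\omega$-systems, which exists by the classical Shchepin spectral theorem for maps of compacta. The task is to thin this representation to a cofinal subsystem in which every coordinate map and every limit square is 0-soft. I would use the standard closing-up argument: call an index $\alpha$ \emph{good} if $f_\alpha$ is 0-soft and the characteristic map of the limit square at $\alpha$ is 0-soft, and show that the set of good indices is $\omega$-closed and cofinal. For cofinality one proceeds by iterated bookkeeping: for each index $\alpha$, enumerate a countable family of metrizable 0-dimensional test lifting problems witnessing the desired 0-softness, solve each at the level of $f$ using its 0-softness, and use the spectral theorem to realize these solutions at a larger coordinate $\beta>\alpha$. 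Iterating countably many times yields a good $\beta$. Closedness of the good set under increasing countable unions follows from continuity of the $\omega$-system.

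The main obstacle is the closing-up construction. One must choose the enumerated test problems so that their simultaneous solvability actually implies the full 0-softness of $f_\beta$ and of the characteristic map at $\beta$, and must verify that the stabilization happens at a countable stage of the bookkeeping. The key observation is that 0-softness of a map between metrizable compacta is equivalent to openness, so test lifting problems against metrizable 0-dimensional paracompacta admit an essentially countable universal family at each metrizable coordinate. The entire argument mirrors Shchepin's original spectral characterization of soft maps, with the reduction to 0-dimensional test spaces keeping the cardinal arithmetic of the closing-up step at the level of $\omega$.
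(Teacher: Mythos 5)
The paper gives no proof of Theorem A --- it is quoted verbatim from Zarichnyi \cite{Zar3} --- so there is no internal argument to compare yours against; I can only assess the proposal on its own terms. Your sufficiency direction is correct and complete as written: for a single fixed $\alpha$ the induced problem is consistent because $q_\alpha\circ\Psi|_A=q_\alpha\circ f\circ\Phi=f_\alpha\circ p_\alpha\circ\Phi$, the solution $G_\alpha$ pairs with $\Psi$ to give a map into $X_\alpha\times_{Y_\alpha}Y$ agreeing with $\chi_\alpha\circ\Phi$ on $A$, and since $f$ is the composite of $\chi_\alpha$ with the projection $X_\alpha\times_{Y_\alpha}Y\to Y$ (the pullback of $f_\alpha$ along $q_\alpha$), the lift through the $0$-soft characteristic map finishes the argument. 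Equivalently, $f$ is a composition of a $0$-soft map with a pullback of a $0$-soft map, and both operations preserve $0$-softness.

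The necessity direction, however, contains a genuine gap at exactly the step you yourself flag as the main obstacle. The closing-up scheme is the standard and correct one, but the reduction of $0$-softness to ``a countable family of metrizable $0$-dimensional test lifting problems'' is asserted rather than established, and it is the entire content of the theorem. Two concrete issues. First, $0$-softness quantifies over all $0$-dimensional paracompact spaces $Z$, which form a proper class; before any bookkeeping can begin one must show that for maps of compacta it suffices to test against $0$-dimensional compacta of weight bounded by $w(X)$, and then against a set of pairs $(Z,A)$. Second, and more seriously, your appeal to ``$0$-soft equals open for maps of metrizable compacta'' reaches the coordinate maps $f_\alpha$ but not the limit squares: the characteristic map $\chi_\alpha\colon X\to X_\alpha\times_{Y_\alpha}Y$ has non-metrizable domain and codomain whenever $X$ and $Y$ do, so its $0$-softness cannot be certified by countably many metrizable tests resolved at one later coordinate $\beta$. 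What is needed is a factorization lemma --- a lifting problem for the square at level $\alpha$, posed over a test pair realized at level $\beta$, can be solved at some $\gamma\ge\beta$ --- together with a continuity argument passing the solved problems to the limit along an increasing $\omega$-chain of indices. That machinery is precisely what Zarichnyi constructs in \cite{Zar3}; without it the cofinality of the set of good indices is not proved, and the necessity half remains a plan rather than a proof.
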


\begin{theorem}\label{metrsoft} Let $X\subset\R^\omega$ be a compact Max-Plus convex subset and $f : X \to Y$ be
an open map onto a compact metrizable space with Max-Plus convex
preimages. Then the map $f$ is soft.
\end{theorem}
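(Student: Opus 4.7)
Since $X\subset\R^\omega$ is compact, both $X$ and $Y$ are metrizable. The plan is to deduce softness from a Michael-style continuous selection theorem adapted to Max-Plus convexity. For a paracompact space $Z$, closed subspace $A\subset Z$, and continuous maps $\Phi\colon A\to X$, $\Psi\colon Z\to Y$ with $f\circ\Phi=\Psi|_A$, what softness demands is a continuous $G\colon Z\to X$ extending $\Phi$ with $f\circ G=\Psi$. Such a $G$ is exactly a continuous selection of the multifunction
$$\Lambda(z)=\begin{cases}\{\Phi(z)\},&z\in A,\\ f^{-1}(\Psi(z)),&z\in Z\setminus A,\end{cases}$$
which is lower semicontinuous (because $f$ is open and $\Phi,\Psi$ are continuous) and has compact Max-Plus convex values by hypothesis.

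To produce the selection I would adapt Michael's classical argument. For a sequence $\e_n\to 0$, a locally finite open cover $\{U_i\}$ of $Z$ with subordinate partition of unity $\{\phi_i\}$, and a choice of points $x_i\in X$ that are $\e_n$-close to $\Lambda(z)$ for every $z\in U_i$ (possible by lower semicontinuity), one replaces the Banach-space barycentre $\sum_i\phi_i(z)x_i$ by its Max-Plus analogue
$$g_n(z)=\bigoplus_i\lambda_i(z)\odot x_i,\qquad \lambda_i(z)\in[-\infty,0],\ \bigoplus_i\lambda_i(z)=0,$$
where the $\lambda_i$ depend continuously on $z$ through $\phi_i$, for instance $\lambda_i(z)=r\bigl(\phi_i(z)-\max_j\phi_j(z)\bigr)$ for an appropriate positive scale $r$. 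Max-Plus convexity of each value $\Lambda(z)$ keeps $g_n(z)$ inside a controlled neighbourhood of $\Lambda(z)$; a Cauchy-sequence argument over $\e_n\to 0$ yields the limit $G$, and the standard modification of the partition of unity near $A$ ensures $G|_A=\Phi$.

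The main obstacle is the quantitative estimate: that the Max-Plus barycentre $\bigoplus_i\lambda_i(z)\odot x_i$ of points $\e$-close to $\Lambda(z)$ stays within a prescribed neighbourhood of $\Lambda(z)$. This requires a careful coordination of the scale $r$ with the diameters of the cover and the uniform continuity of $\odot$ and $\oplus$ on compacta. Once this estimate is in hand, the Michael iteration transfers to the Max-Plus setting without essential change. An alternative route is to invoke a Horvath-type selection theorem: the Max-Plus convex hull of any finite subset of $X$ is compact and AR, so these hulls supply a ``convex structure'' in Horvath's sense on $X$, and his theorem directly produces the continuous selection of $\Lambda$.
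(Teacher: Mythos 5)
Your proposal takes essentially the same route as the paper: the paper's entire proof is a one-line appeal to Zarichnyi's Michael-type selection theorem for max-plus compact convex sets \cite{Zar2} (proved there for finite-dimensional $X$, with the remark that the same arguments extend to $X\subset\R^\omega$), and your reduction of softness to a continuous selection of the lower semicontinuous Max-Plus-convex-valued multifunction $\Lambda$, followed by a Max-Plus adaptation of Michael's iteration, is precisely that argument. The quantitative estimate you flag as the main obstacle is exactly the technical content carried by the cited selection theorem, so your sketch matches the paper's intended proof.
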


\begin{proof} The theorem can be proved using the same arguments as in \cite{Zar2}, where the statement of the theorem was proved  for finite-dimensional $X$.
\end{proof}

A Max-Plus convex compactum $K$ is said to be
I-barycentrically soft (open), if the idempotent barycenter map $b_K$ is soft (open). It is easy to see that the idempotent barycenter map has Max-Plus convex preimages. Hence we obtain the following corollary.

\begin{corollary}\label{metr} Each metrizable I-barycentrically  open compactum is I-barycentrically soft.
\end{corollary}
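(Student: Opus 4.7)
The plan is to apply Theorem \ref{metrsoft} directly to $f = b_K : IK \to K$. For this I need to check three things: (i) the codomain $K$ is compact and metrizable, (ii) $b_K$ is open with Max-Plus convex preimages, and (iii) $IK$ can be realised as a compact Max-Plus convex subset of $\R^\omega$.

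Point (i) is given (metrizability of $K$), and (ii) is immediate: openness is precisely the hypothesis that $K$ is I-barycentrically open, while the remark just preceding the corollary records that $b_K$ has Max-Plus convex preimages. So the only work is in (iii).

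For (iii) I would argue as follows. Since $K$ is a metrizable compactum and the functor $I$ preserves the weight of infinite compacta (used in the proof of Theorem \ref{Gen}, cited from \cite{Zar}), the compactum $IK$ is also metrizable; in the finite case $IK$ is trivially second countable. The diagonal product $(\tilde\varphi)_{\varphi\in C(K)}$ embeds $IK$ into $\R^{C(K)}$ as a Max-Plus convex subset. Choose a countable dense subset $\{\varphi_n\}_{n\in\N}\subset C(K)$ and consider the restricted diagonal product
\[
e : IK \to \R^\omega,\qquad e(\nu)=(\tilde\varphi_n(\nu))_{n\in\N}=(\nu(\varphi_n))_{n\in\N}.
\]
Each coordinate map $\tilde\varphi_n$ is Max-Plus affine (it is the restriction of the projection $\pr_{\varphi_n}:\R^{C(K)}\to\R$, under which $IK$ is Max-Plus convex), hence $e$ itself is Max-Plus affine. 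It is continuous, and by density of $\{\varphi_n\}$ in $C(K)$ together with continuity of each functional $\nu\in IK$ on $C(K)$, the map $e$ is injective, hence an embedding of the compactum $IK$ into $\R^\omega$. Max-Plus affinity of $e$ then guarantees that $e(IK)$ is Max-Plus convex in $\R^\omega$.

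Having placed $IK$ inside $\R^\omega$ as a compact Max-Plus convex subset, with $b_K:IK\to K$ open, onto a metrizable compactum, and with Max-Plus convex preimages, Theorem \ref{metrsoft} applies and yields softness of $b_K$, i.e.\ I-barycentric softness of $K$. I expect no serious obstacle; the only subtle point is verifying that the countable diagonal product separates points of $IK$ and is Max-Plus affine, which is the standard separability argument sketched above.
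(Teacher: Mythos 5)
Your proposal is correct and follows the same route as the paper: apply Theorem \ref{metrsoft} to $f=b_K:IK\to K$, using the observation that $b_K$ has Max-Plus convex preimages (the paper's entire stated justification) together with openness from the hypothesis. The only thing you add is an explicit verification that $IK$ sits in $\R^\omega$ as a compact Max-Plus convex subset via a countable Max-Plus affine diagonal product, a point the paper leaves implicit; your argument for it is sound.
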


Now we investigate non-metrizable compacta. It was proved in \cite{Zar} that the functor $I$ preserves open maps, i.e.  the openness of a map $f:X\to Y$ implies  the openness of the map $If:IX\to IY$. We will need the converse statement.

\begin{lemma}\label{refl}  Let $f:X\to Y$ be a continuous map such that  the map $If:IX\to IY$ is open. Then $f$ is open.
\end{lemma}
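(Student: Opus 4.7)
The plan is to exploit the fact that $\delta X : X \to IX$ and $\delta Y : Y \to IY$ are topological embeddings and that the diagram
$$
\CD
X @>\delta X>> IX \\
@VVfV @VVIfV \\
Y @>\delta Y>> IY
\endCD
$$
commutes (because $If(\delta_x)(\psi) = \delta_x(\psi\circ f) = \psi(f(x)) = \delta_{f(x)}(\psi)$). So to check openness of $f$, I will push a small neighborhood of $x \in U$ up to $IX$, apply the assumed openness of $If$ to obtain an open set in $IY$, and then pull back via $\delta Y$ to get an open neighborhood of $f(x)$ contained in $f(U)$.

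Concretely, fix an open set $U \subset X$ and $x \in U$. By normality, choose $\varphi \in C(X)$ with $\varphi(x) > 0$ and $\varphi(z) \le 0$ for all $z \in X \setminus U$. Set
$$
W := \{\nu \in IX \mid \nu(\varphi) > 0\},
$$
which is open in $IX$ (the functional $\nu \mapsto \nu(\varphi) = \tilde\varphi(\nu)$ is continuous) and contains $\delta_x$. The key observation is that every $\nu \in W$ must have its density supported, in part, inside $U$: using $\nu(\varphi) = \max_{z\in X}\bigl(d_\nu(z) + \varphi(z)\bigr)$ and the fact that $d_\nu \le 0$ everywhere and $\varphi \le 0$ off $U$, the strict inequality $\nu(\varphi) > 0$ forces the maximum to be attained at some $z_0 \in U$ with $d_\nu(z_0) > -\infty$.

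By hypothesis $If(W)$ is open in $IY$, and it contains $If(\delta_x) = \delta_{f(x)}$. Since $\delta Y : Y \hookrightarrow IY$ is an embedding,
$$
V := (\delta Y)^{-1}(If(W)) = \{y \in Y \mid \delta_y \in If(W)\}
$$
is an open neighborhood of $f(x)$ in $Y$. I then claim $V \subset f(U)$. Indeed, for $y \in V$ pick $\nu \in W$ with $If(\nu) = \delta_y$. By Lemma \ref{mapdenc}, $d_{\delta_y}(y') = \max\{d_\nu(z)\mid z \in f^{-1}(y')\}$, which equals $-\infty$ for every $y' \ne y$. Hence $d_\nu$ vanishes to $-\infty$ on $X \setminus f^{-1}(y)$. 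Combined with the previous paragraph, the point $z_0 \in U$ with $d_\nu(z_0) > -\infty$ must lie in $f^{-1}(y)$, so $y = f(z_0) \in f(U)$.

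Thus $f(U)$ contains an open neighborhood of each of its points $f(x)$, hence $f(U)$ is open and $f$ is an open map. There is no serious obstacle here; the only step that requires any care is the translation between the functional $\nu(\varphi) > 0$ and the density-level statement that the max is attained inside $U$, and this follows at once from $\max\varphi \cdot$ (rather, $d_\nu\le 0$ and $\varphi\le 0$ off $U$). Everything else is just the functoriality of $I$ and the embedding property of $\delta Y$.
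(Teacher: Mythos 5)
Your proof is correct and follows essentially the same route as the paper's: both hinge on a test function that is positive at $x$ and $\le 0$ off $U$ together with the observation that any $\nu$ with $If(\nu)=\delta_y$ must be concentrated on $f^{-1}(y)$. The only differences are cosmetic --- you argue directly rather than by contradiction with a net, and you extract the support statement from Lemma \ref{mapdenc} and the density representation where the paper invokes preservation of preimages by the functor $I$.
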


\begin{proof} Suppose the contrary. Then there exists a point $x\in X$, a neighborhood $U$ of $x$ and a net $(y_\alpha)_{\alpha\in A}$ converging to $f(x)$ such that $f^{-1}(y_\alpha)\cap U=\emptyset$. Then we have that the net $(\delta_{y_\alpha})_{\alpha\in A}$ converges to $\delta_{f(x)}=Ff(\delta_x)$. Take a function $\psi\in C(X)$ such that $\psi(x)=1$ and $\psi(X\setminus U)\subset\{0\}$. We have $\delta_x(\psi)=1$. Since the functor $I$ preserves preimages \cite{Zar}, we have $(If)^{-1}(\delta_{y_\alpha})\subset I(X\setminus U)$ for each $\alpha\in A$. Hence $\nu(\psi)=0$ for each $\nu\in(If)^{-1}(\delta_{y_\alpha})$ and $\alpha\in A$. We obtain a contradiction to openness of the map $If$.
\end{proof}

\begin{corollary}\label{OG} The compactum $IX$ is openly generated  if and only if $X$ is openly generated.
\end{corollary}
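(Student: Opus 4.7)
The plan is to prove both directions by exploiting functoriality of $I$ together with Lemma~\ref{refl}, which supplies the key converse that openness of $If$ implies openness of $f$. The forward direction is essentially routine; the reverse direction requires invoking Shchepin's spectral theorem.

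For the direction $X$ openly generated $\Rightarrow$ $IX$ openly generated, I would represent $X$ as the limit of an $\omega$-system $S=\{X_\alpha,p_\alpha^\beta\}$ with open bonding maps and apply $I$ termwise. Since $I$ preserves weights of infinite compacta \cite{Zar}, each $IX_\alpha$ remains metrizable; since $I$ preserves open maps \cite{Zar}, each bonding map $Ip_\alpha^\beta$ is open; since $I$ is a normal functor \cite{Zar}, it preserves limits of continuous inverse systems, so $IX$ is the limit of $I(S)$. Thus $I(S)$ exhibits $IX$ as the limit of an $\omega$-system with open bonding maps.

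For the converse, I would start from any continuous $\omega$-complete inverse system $T=\{X_\alpha,p_\alpha^\beta,A\}$ of metrizable compacta with limit $X$, obtained via Shchepin's spectral decomposition. Applying $I$ yields the system $I(T)$ of metrizable compacta with limit $IX$. Since $IX$ is openly generated, Shchepin's uniqueness theorem produces a cofinal directed subset $B\subseteq A$ such that $Ip_\alpha^\beta$ is open for all $\alpha\le\beta$ in $B$. Lemma~\ref{refl} then forces each $p_\alpha^\beta$ with $\alpha,\beta\in B$ to be open, making the restricted system an $\omega$-system with open bonding maps whose limit is $X$. The main obstacle is the careful invocation of Shchepin's spectral machinery: verifying that $I(T)$ is genuinely an $\omega$-system (via normality and weight preservation of $I$) and that being openly generated transfers across such systems to yield the cofinal $B$ with the required openness property.
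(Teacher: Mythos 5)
Your proposal is correct and is exactly the argument the paper intends: the paper states this as an immediate corollary of the fact that $I$ preserves open maps (for the forward direction) and of Lemma~\ref{refl} (for the converse), with both directions mediated by Shchepin's spectral theorem applied to an $\omega$-system representation, just as you describe. The only point worth making explicit is that normality of $I$ guarantees $I(T)$ is again a continuous $\omega$-complete system with limit $IX$, which you do note.
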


\begin{lemma}\label{OD}  Let $f:X\to Y$ be a Max-Plus affine surjective map between Max-Plus convex compacta $X$ and $Y$ such that the diagram

$$ \CD
IX  @>If>>   IY \\
@VV\beta_XV      @VV\beta_YV  \\
X  @>f>>   Y
\endCD
$$
is open. Then $f$ is open.
\end{lemma}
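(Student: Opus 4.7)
The plan is a contradiction argument parallel to Lemma \ref{refl}, but now lifting nets through the characteristic map $\chi\colon IX\to IY\times_YX$, $\chi(\mu)=(If(\mu),\beta_X(\mu))$, whose openness is exactly what the hypothesis provides. Suppose $f$ is not open: there exist $x_0\in X$, an open neighbourhood $U$ of $x_0$, and a net $y_\alpha\to y_0:=f(x_0)$ with $f^{-1}(y_\alpha)\cap U=\emptyset$ and $y_\alpha\ne y_0$; by surjectivity pick $a_\alpha\in f^{-1}(y_\alpha)\subset X\setminus U$.

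The central construction is a fibre-product net $(\nu_\alpha,x_\alpha)\to(\delta_{y_0},x_0)=\chi(\delta_{x_0})$ whose every lift must retain a weighted summand over the forbidden fibres $f^{-1}(y_\alpha)$. Fix a scalar $s<0$ and set
\[
\nu_\alpha:=s\odot\delta_{y_\alpha}\oplus\delta_{y_0}\in IY,\qquad x_\alpha:=s\odot a_\alpha\oplus x_0\in X.
\]
Max-Plus affineness of $f$ immediately gives $f(x_\alpha)=s\odot y_\alpha\oplus y_0=\beta_Y(\nu_\alpha)$, so $(\nu_\alpha,x_\alpha)$ lies in $IY\times_YX$; the convergence $y_\alpha\to y_0$ together with $s\le 0$ forces $\nu_\alpha\to\delta_{y_0}$, and a coordinatewise inspection yields $x_\alpha\to x_0$ in the product topology on $X$. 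Thus $(\nu_\alpha,x_\alpha)\to\chi(\delta_{x_0})$, and openness of $\chi$ supplies a lift $\mu_\alpha\to\delta_{x_0}$ in $IX$ with $If(\mu_\alpha)=\nu_\alpha$ and $\beta_X(\mu_\alpha)=x_\alpha$.

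Applying Lemma \ref{supp+} to the disjoint closed singletons $\{y_\alpha\}$ and $\{y_0\}$ now yields $\mu_\alpha=s\odot\nu'_\alpha\oplus\pi'_\alpha$ with $\nu'_\alpha\in I(f^{-1}(y_\alpha))$ and $\pi'_\alpha\in I(f^{-1}(y_0))$. By compactness extract a subnet on which $\nu'_\alpha\to\nu^*$ and $\pi'_\alpha\to\pi^*$; continuity of $\odot,\oplus$ and $\mu_\alpha\to\delta_{x_0}$ give $s\odot\nu^*\oplus\pi^*=\delta_{x_0}$. A pointwise density comparison — here the finiteness of $s$ is crucial, because the right-hand side has density $-\infty$ off $x_0$ — forces $d_{\nu^*}(x)=-\infty$ for every $x\ne x_0$, and the normalization $\max d_{\nu^*}=0$ then gives $\nu^*=\delta_{x_0}$. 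Continuity of $\beta_X$ yields $\beta_X(\nu'_\alpha)\to\beta_X(\delta_{x_0})=x_0$, while $\beta_X(\nu'_\alpha)\in f^{-1}(y_\alpha)\subset X\setminus U$ sits in a closed set avoiding $x_0$ — the desired contradiction.

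The hard part I expect to have to address is guaranteeing $x_\alpha\to x_0$ for a finite $s$, since $x_\alpha=\max(s+a_\alpha,x_0)$ coordinatewise in the ambient $\R^\sigma$ only converges to $x_0$ when $s+(a_\alpha)_t\le(x_0)_t$ eventually for each coordinate $t$. This is automatic when the coordinate ranges of $X$ are uniformly bounded; in the general case one can either pass to a subnet on which $a_\alpha$ converges in $X$ and choose $s$ relative to the limit, or reduce to finite-dimensional quotients arising from the $\omega$-system presentation of $X,Y$ in the spirit of Theorem A, where uniform boundedness is automatic.
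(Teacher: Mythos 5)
Your overall strategy is the right one and matches the paper's: argue by contradiction, feed a suitable net of the fibered product into the characteristic map, and use Lemma \ref{supp+} to force every lift of $s\odot\delta_{y_\alpha}\oplus\delta_{y_0}$ to carry a summand of fixed finite weight $s$ over the forbidden fibre $f^{-1}(y_\alpha)$. The endgame you propose (passing to limits of $\nu'_\alpha,\pi'_\alpha$ and comparing densities) also works. But the step you yourself flag is a genuine gap, not a technicality: for a Max-Plus convex compactum $X\subset\R^\tau$ the coordinate ranges $\pr_t(X)$ need not be uniformly bounded over $t$ (e.g.\ the Max-Plus segment joining $(0,0,\dots)$ to $(1,2,3,\dots)$ in $\R^\omega$ is compact), so $\inf_t\bigl((x_0)_t-(a^*)_t\bigr)$ can be $-\infty$ even after passing to a subnet with $a_\alpha\to a^*$; then for \emph{no} finite $s<0$ does $s\odot a_\alpha\oplus x_0$ converge to $x_0$, it converges instead to $s\odot a^*\oplus x_0\ne x_0$. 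Neither of your proposed repairs closes this: choosing $s$ ``relative to the limit'' fails for exactly this reason, and the reduction to finite-dimensional quotients would require knowing that the quotient diagrams are open, which is essentially the content of Theorem A plus the spectral theorem and is not available inside this lemma. Taking $s\to-\infty$ along the net restores $x_\alpha\to x_0$ but destroys the contradiction, since the weight over $f^{-1}(y_\alpha)$ then vanishes in the limit --- this is the tension your argument does not resolve.

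The paper resolves it by not aiming at $x_0$ at all. It passes to a subnet on which $f^{-1}(y_\alpha)$ converges in the Vietoris hyperspace to some $A\subset f^{-1}(y_0)$ with $x_0\notin A$, picks $x_1\in A$ and a finite $s<0$ with $s\odot x_1\oplus x_0\notin A$ (possible since $s\odot x_1\oplus x_0\to x_0$ coordinatewise as $s\to-\infty$), and targets the point $\delta_{s\odot x_1\oplus x_0}$. The approximating net is $\bigl(s\odot x_\alpha\oplus x_0,\ s\odot\delta_{y_\alpha}\oplus\delta_{y_0}\bigr)$ with $x_\alpha\in f^{-1}(y_\alpha)$, $x_\alpha\to x_1$; its convergence to $\chi(\delta_{s\odot x_1\oplus x_0})$ is now automatic from continuity of $\odot$ and $\oplus$, with no uniform boundedness needed. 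The contradiction is then obtained with a function $\varphi$ equal to $-s+1$ on a closed neighbourhood of $A$ containing the fibres $f^{-1}(y_\alpha)$ and equal to $0$ at $s\odot x_1\oplus x_0$: Lemma \ref{supp+} forces every lift $\mu$ of $s\odot\delta_{y_\alpha}\oplus\delta_{y_0}$ to satisfy $\mu(\varphi)\ge 1$. If you rerun your density/barycenter endgame with this corrected target you essentially recover the paper's proof; as written, though, the central convergence claim can fail and the argument does not go through.
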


\begin{proof} Suppose the contrary. Then there exists a point $x\in X$, a neighborhood $U$ of $x$ and a net $(y_\alpha)_{\alpha\in A}$ converging to $f(x)=y$ such that $f^{-1}(y_\alpha)\cap U=\emptyset$.
(By $\exp X$ we denote the hyperspace of $X$, i.e., the set of nonempty
closed subsets of $X$ endowed with Vietoris topology).  We can assume that $f^{-1}(y_\alpha)$ converges to $A\in\exp X$. Since $f$
is a closed map we have that $A$ is a   subset in $f^{-1}(y)$. Evidently, $x\notin A$.
Choose a point $x_1\in A$. There exists $s\in(-\infty,0)$ such that $s\odot x_1\oplus x \notin A$. Consider any open  set $V\supset A$ such that $s\odot x_1\oplus x \notin \Cl V$. We can assume that $f^{-1}(y_\alpha)\subset V$ for every $\alpha\in\B$. Consider $\delta_{s\odot x_1\oplus x} \in IX$. Then $\chi(\delta_{s\odot x_1\oplus x})=(s\odot x_1\oplus x;\delta_{y})$.

For each $\alpha\in\B$ choose a point $x_\alpha\in f^{-1}(y_\alpha)$ such that the net $x_\alpha$ converges to $x_1$.  We have that the net $s\odot x_\alpha\oplus x$ converges to $s\odot x_1\oplus x$ in $X$ and the net $s\odot \delta_{y_\alpha}\oplus \delta_y$ converges to $\delta_y$ in  $IY$. Moreover, $(s\odot x_\alpha\oplus x;s\odot \delta_{y_\alpha}\oplus \delta_y)\in X\times_T IT$.

Choose a  function $\varphi\in C(X)$ such that $\varphi(\Cl V)\subset\{-s+1\}$ and $\varphi(s\odot x_1\oplus x)=0$. Consider the neighborhood $O=\{\nu\in IX| \nu(\varphi)<\frac12\}$ of $\delta_{s\odot x_1\oplus x}$. Let $\mu\in If^{-1}(s\odot \delta_{y_\alpha}\oplus \delta_y)$. By Lemma \ref{supp+} we have $\mu=s\odot\eta\oplus \nu$ where $ \eta\in I(f^{-1}(y_\alpha))$ and $\nu\in I(f^{-1}(y))$. Hence $\mu(\varphi)=1>\frac12$ and $\mu\notin O$.
We obtain a contradiction to openness of the characteristic map $\chi$.
\end{proof}

\begin{theorem}\label{softOG} Let $K$ be a Max-Plus convex compactum such that the map $\beta_K$ is 0-soft. Then $K$ is openly generated.
\end{theorem}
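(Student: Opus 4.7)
The plan is to realize $K$ as the limit of an $\omega$-spectrum with open bonding maps, by transferring the $0$-softness of $\beta_K$ through Theorem~A and Shchepin's spectral theorem to the natural $\omega$-spectrum coming from the Max-Plus convex structure of $K$.

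First I produce a canonical $\omega$-spectrum for $K$. Embed $K$ as a Max-Plus convex subset of $\R^T$. For each countable $S\subset T$ the set $K_S=\pr_S(K)\subset\R^S$ is a Max-Plus convex metrizable compactum, and for $S\subset S'$ the restriction $p_{SS'}:K_{S'}\to K_S$ of the coordinate projection is a Max-Plus affine surjection. The $\omega$-system $\{K_S,p_{SS'}\}$ has limit $K$. Since $I$ preserves limits of $\omega$-systems \cite{Zar}, $\{IK_S,Ip_{SS'}\}$ has limit $IK$, and naturality of the barycenter map turns $\{\beta_{K_S}\}$ into a morphism of $\omega$-systems whose limit is $\beta_K$.

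Next I feed $\beta_K$ into Theorem~A. This produces abstract $\omega$-spectra with limits $IK$ and $K$ and a morphism realizing $\beta_K$, all of whose components and limit squares are $0$-soft. By Shchepin's spectral theorem these abstract spectra may be identified, after passing to cofinal subfamilies, with the natural spectra above so that the abstract morphism becomes $\{\beta_{K_S}\}$. This yields a cofinal family $\C$ of countable subsets of $T$ such that for every $S\in\C$ the map $\beta_{K_S}$ is $0$-soft and the limit square
$$\CD
IK   @>\pi_S>>   IK_S \\
@VV\beta_K V     @VV\beta_{K_S}V \\
K    @>p_S>>     K_S
\endCD$$
is $0$-soft.

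Finally, for $S\subset S'$ in $\C$ a pasting argument with the two limit squares at $S$ and $S'$ (which differ only by the intermediate level) shows that the inter-level square
$$\CD
IK_{S'}  @>Ip_{SS'}>>  IK_S \\
@VV\beta_{K_{S'}}V     @VV\beta_{K_S}V \\
K_{S'}   @>p_{SS'}>>   K_S
\endCD$$
is itself $0$-soft, so its characteristic map is in particular open. Because $p_{SS'}$ is a Max-Plus affine surjection between Max-Plus convex compacta, Lemma~\ref{OD} then gives the openness of $p_{SS'}$. Thus $K$ is the limit of the $\omega$-spectrum $\{K_S\}_{S\in\C}$ with open bonding maps, i.e., $K$ is openly generated.

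The main obstacle I expect is the spectral alignment step --- using Shchepin's theorem to replace the abstract Theorem~A decomposition with the natural one so that the morphism becomes exactly $\{\beta_{K_S}\}$ --- together with the derivation of $0$-softness of the inter-level squares from that of the limit squares. Both belong to the standard toolkit of Shchepin spectral theory but must be set up with care.
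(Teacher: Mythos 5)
Your overall strategy --- present $K$ as the limit of an $\omega$-system of metrizable Max-Plus convex compacta with Max-Plus affine bonding maps, invoke Theorem~A together with Shchepin's spectral theorem to obtain a cofinal family of $0$-soft (hence open) limit squares, and then feed an open square into Lemma~\ref{OD} --- is exactly the paper's. The gap is in your final step, the ``pasting argument'' that is supposed to upgrade $0$-softness of the limit squares to $0$-softness (or even just openness) of the inter-level squares. Writing $\chi_S$, $\chi_{S'}$ for the characteristic maps of the limit squares and $\chi_{SS'}$ for that of the inter-level square, the factorization one naturally obtains is $\chi_{SS'}\circ Ip_{S'}=q\circ\chi_S$, where $q$ is the map of fibered products induced by the projection $p_{S'}:K\to K_{S'}$. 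To extract openness of $\chi_{SS'}$ from this you need $q$ to be open, and $q$ is a pullback of $p_{S'}$ --- so you need the projections of the spectrum to be open, which is precisely what the theorem is trying to establish. The circularity can be broken by a more careful argument (identifying the relevant image as a saturated open set and using that a surjection of compacta is a quotient map), but nothing of the sort appears in your sketch, and your stronger claim that the inter-level square is $0$-soft, rather than merely open, would need still more.

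The detour is in any case unnecessary: Lemma~\ref{OD} applies verbatim to the limit square itself, since the limit projection $p_S:K\to K_S$ is already a Max-Plus affine surjection between Max-Plus convex compacta and its square with $\beta_K$ and $\beta_{K_S}$ is $0$-soft, hence open, for every $S$ in the cofinal family. This gives openness of the projections $p_S$ directly, and openness of the bonding maps then follows from $p_{SS'}(U)=p_S\bigl(p_{S'}^{-1}(U)\bigr)$. That is how the paper argues, and it is the repair you should make.
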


\begin{proof} Present $K$ as a limit of an
$\omega$-system $S_K=\{K_\alpha,p_\alpha,\A\}$ where $K_\alpha$ are Max-Plus convex metrizable
compacta and bonding maps $p_\alpha$ are Max-Plus affine for every $\alpha\in\A$. If
the map $b_K:I(K)\to K$ is 0-soft, then, using the spectral theorem of
E.V.~Shchepin \cite{Shchep} and theorem A, we obtain that there exists a
closed cofinal subset $\B\subset\A$ such that for each $\alpha\in\B$ the
diagram
$$ \CD I(K)     @>I(p_\alpha)>>    I(K_\alpha)  \\
@VVb_KV @VVb_{K_\alpha}V
\\ K       @>p_\alpha>>         K_\alpha \endCD $$
is 0-soft and therefore open. It follows from Lemma \ref{OD} that the map $p_\alpha$ is open
for each $\alpha\in\B$. But since $K=\lim\{K_\alpha,p_\alpha,\B\}$, the
compactum $K$ is openly generated. The theorem is proved.
\end{proof}

\begin{lemma}\label{noniz}  Let compactum $X$ be a limit space of  an
$\omega$-system $S_X=\{X_\alpha,p_\alpha,\A\}$ and $x\in X$ has uncountable character. Then  there exists $\alpha\in \A$ such that the point $p_\alpha(x)$ is non-isolated in $X_\alpha$.
\end{lemma}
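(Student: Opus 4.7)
The plan is to argue by contradiction: assume $p_\alpha(x)$ is isolated in $X_\alpha$ for every $\alpha\in\A$. Since $\chi(x,X)>\omega$ in particular forces $x$ to be non-isolated in $X$ (otherwise $\chi(x,X)=1$), one aims at a contradiction from this combination. For each $\alpha$ set $W_\alpha=p_\alpha^{-1}(\{p_\alpha(x)\})$; the isolation assumption makes each $W_\alpha$ a clopen neighbourhood of $x$, one has $W_\beta\subseteq W_\alpha$ whenever $\alpha\le\beta$, and $\bigcap_\alpha W_\alpha=\{x\}$ by separation of points in the inverse limit.

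The core step is a stabilisation lemma: for any strictly increasing countable chain $\alpha_0<\alpha_1<\cdots$ in $\A$, with sup $\alpha^*:=\sup_n\alpha_n$ lying in $\A$ by $\omega$-completeness, some $n$ satisfies $W_{\alpha_n}=W_{\alpha^*}$. Indeed, continuity of the $\omega$-system gives $X_{\alpha^*}=\lim_n X_{\alpha_n}$, so the clopen singleton $\{p_{\alpha^*}(x)\}$ is a basic open set of this limit; hence there exist $n$ and an open $V\ni p_{\alpha_n}(x)$ in $X_{\alpha_n}$ whose preimage in $X_{\alpha^*}$ under the bonding map $p_{\alpha_n}^{\alpha^*}:X_{\alpha^*}\to X_{\alpha_n}$ is exactly $\{p_{\alpha^*}(x)\}$. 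In particular the fibre $(p_{\alpha_n}^{\alpha^*})^{-1}(p_{\alpha_n}(x))$ is a singleton in $X_{\alpha^*}$, so any $y\in W_{\alpha_n}\subseteq X$ satisfies $p_{\alpha^*}(y)=p_{\alpha^*}(x)$; this gives $W_{\alpha_n}\subseteq W_{\alpha^*}$, and the reverse inclusion is automatic from $\alpha_n\le\alpha^*$.

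With this in hand, one builds inductively a strictly decreasing chain $W_{\alpha_0}\supsetneq W_{\alpha_1}\supsetneq\cdots$. Start with any $\alpha_0\in\A$. Given $\alpha_n$: non-isolation of $x$ in $X$ forces $W_{\alpha_n}\ne\{x\}$, so choose $y_n\in W_{\alpha_n}\setminus\{x\}$; since $\bigcap_\alpha W_\alpha=\{x\}$ there is $\beta_n\in\A$ with $y_n\notin W_{\beta_n}$; put $\alpha_{n+1}=\sup(\alpha_n,\beta_n)\in\A$, giving $W_{\alpha_{n+1}}\subseteq W_{\alpha_n}\cap W_{\beta_n}$, so in particular $y_n\notin W_{\alpha_{n+1}}$. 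The sup $\alpha^*=\sup_n\alpha_n$ of this chain again lies in $\A$, and the stabilisation lemma forces $W_{\alpha_n}=W_{\alpha^*}$ for some $n$, contradicting the strict decrease.

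I expect the stabilisation lemma to be the only delicate step. The point requiring care is that the singleton-fibre conclusion in $X_{\alpha^*}$ genuinely lifts to the inclusion $W_{\alpha_n}\subseteq W_{\alpha^*}$ in $X$, via the factorisation $p_{\alpha_n}=p_{\alpha_n}^{\alpha^*}\circ p_{\alpha^*}$; this is the one place where the specific structure of the $\omega$-system (both continuity and $\omega$-completeness of $\A$) really does work. Everything else is mechanical once one commits to tracking the clopen fibres $W_\alpha$ rather than arbitrary basic neighbourhoods of $x$.
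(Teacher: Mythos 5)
Your argument is correct, and every step checks out: the sets $W_\alpha=p_\alpha^{-1}(p_\alpha(x))$ are indeed clopen under the isolation assumption, the stabilisation lemma is a valid use of continuity of the system at the countable sup, and the inductive construction of a strictly decreasing chain $W_{\alpha_0}\supsetneq W_{\alpha_1}\supsetneq\cdots$ together with $\omega$-completeness yields the desired contradiction. Your proof is essentially the contrapositive of the one in the paper, with the same two engines ($\omega$-completeness to take sups of countable chains, continuity to identify $X_{\alpha^*}$ with $\lim_n X_{\alpha_n}$), but the hypotheses are consumed differently. The paper argues directly: since $X_{\alpha_1}$ is metrizable and $\chi(x,X)>\omega$, the fibre $p_{\alpha_1}^{-1}(p_{\alpha_1}(x))$ cannot be a singleton (a closed surjection of compacta with singleton fibre over $p_{\alpha_1}(x)$ would give $x$ countable character); two points of that fibre are then separated at a higher index, the step is iterated to produce a chain along which every bonding fibre $(p^{\alpha_{i+1}}_{\alpha_i})^{-1}(p_{\alpha_i}(x))$ is non-degenerate, and non-isolation of $p_{\alpha^*}(x)$ drops out at the sup. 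Your version tracks the fibres of the limit projections instead of the bonding maps, and extracts the strict decrease $W_{\alpha_{n+1}}\subsetneq W_{\alpha_n}$ from the isolation assumption itself (which makes $W_{\alpha_n}$ open) plus the mere non-isolation of $x$; the two conditions are in fact equivalent, since $W_{\alpha_{n+1}}\subsetneq W_{\alpha_n}$ exactly when the bonding fibre over $p_{\alpha_n}(x)$ is non-degenerate. The practical difference is that you never use metrizability of the factors and never use the uncountable character beyond deducing that $x$ is non-isolated, so you prove the formally stronger statement that any non-isolated point of the limit of an $\omega$-system has a non-isolated projection to some $X_\alpha$; the paper's direct route is shorter on the page but uses strictly more of the hypothesis than it needs.
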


\begin{proof} Take any $\alpha_1\in\A$. Since $X_{\alpha_1}$ is metrizable, the set  $p_{\alpha_1}^{-1}(p_{\alpha_1}(x))$ contains more then one point. There exists $\alpha_2\in A$ such that $(p^{\alpha_2}_{\alpha_1})^{-1}(p_{\alpha_1}(x))$ contains more then one point. Inductively we can find a sequence $\alpha_1\le\alpha_2\le\dots\le\alpha_i\le\dots$ such that $(p^{\alpha_{i+1}}_{\alpha_i})^{-1}(p_{\alpha_i}(x))$ contains more then one point for each $i\in\N$. Since the system $S_X$ is $\omega$-complete, there exists $\alpha=\sup_{i\in\N}\alpha_i$. Since the system $S_X$ is continuous, $X_\alpha=\lim\{X_{\alpha_i},p^\alpha_{\alpha_i},\N\}$ and the point $p_\alpha(x)$ is non-isolated in $X_\alpha$.
\end{proof}

\begin{theorem}\label{softmetr} Let $X$ be a  compactum such that the map $\beta_{IX}$ is 0-soft. Then $X$ is metrizable.
\end{theorem}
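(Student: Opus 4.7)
The plan is to combine the openness-preservation results established earlier in this section with Shchepin's spectral theorem and the support calculus of Section~2. First, 0-softness of $\beta_{IX}$ together with Theorem \ref{softOG} gives that $IX$ is openly generated, and then Corollary \ref{OG} yields that $X$ is openly generated. Since openly generated compacta satisfy $w(X)=\chi(X)$, it suffices to rule out the case of some point $x_0\in X$ with uncountable character.

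Assume toward contradiction such an $x_0$ exists, and represent $X=\lim\{X_\alpha,p^\beta_\alpha,\A\}$ as an $\omega$-system with open bonding maps and metrizable factors. Applying Shchepin's spectral theorem together with Theorem~A to the 0-soft map $\beta_{IX}$ yields a closed cofinal $\B\subset\A$ such that for each $\alpha\in\B$ the square
$$ \CD
I^2X  @>I^2 p_\alpha>>   I^2 X_\alpha \\
@VV\beta_{IX}V      @VV\beta_{IX_\alpha}V  \\
IX  @>Ip_\alpha>>   IX_\alpha
\endCD $$
is 0-soft; in particular $\beta_{IX_\alpha}$ is 0-soft and hence soft by Corollary \ref{metr}. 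By Lemma \ref{noniz}, after passing to a larger index in $\B$ (openness of bonding maps preserves non-isolation under projection), one may assume $z_0:=p_\alpha(x_0)$ is non-isolated in $X_\alpha$. Pick $z_n\to z_0$ in $X_\alpha$ with $z_n\neq z_0$ and form $\lambda_n=(-1)\odot\delta_{z_n}\oplus\delta_{z_0}\in IX_\alpha$, so $\lambda_n\to\delta_{z_0}$.

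The main step is a lifting problem for the 0-soft characteristic map of the displayed square. On a suitable 0-dimensional paracompact $Z$ (whose structure is tuned to the uncountable character at $x_0$) with a distinguished limit point $z_\infty\in A\subset Z$, define continuous $\psi_1:Z\to I^2X_\alpha$ targeting the Dirac masses $\delta_{\lambda_n}$ and converging to $\delta_{\delta_{z_0}}$ at $z_\infty$, together with $\psi_2:Z\to IX$ satisfying $Ip_\alpha\circ\psi_2=\beta_{IX_\alpha}\circ\psi_1$ and $\psi_2(z_\infty)=\delta_{x_0}$. Over $z_\infty$ prescribe the partial lift $\delta_{\delta_{x_0}}\in I^2X$, which is the unique element of $\beta_{IX}^{-1}(\delta_{x_0})$ by Lemma \ref{supp}. 0-softness of the characteristic map then produces a continuous extension $\M:Z\to I^2X$ with $I^2p_\alpha\circ\M=\psi_1$ and $\beta_{IX}\circ\M=\psi_2$.

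The contradiction is to be drawn from the density-and-support calculus. Lemma \ref{supp+} applied to $p_\alpha$ with the disjoint closed sets $\{z_n\},\{z_0\}\subset X_\alpha$ forces each value $\psi_2(z)$ lying over $\lambda_n$ to decompose as $(-1)\odot\eta\oplus\pi$ with $\eta\in I(p_\alpha^{-1}(z_n))$ and $\pi\in I(p_\alpha^{-1}(z_0))$, so $\psi_2(z)$ carries density $-1$ at some point of $p_\alpha^{-1}(z_n)\subset X\setminus\{x_0\}$; the forced convergence to $\delta_{x_0}$ in $IX$ therefore produces a net in $X\setminus\{x_0\}$, indexed by the 0-dimensional structure of $Z$, accumulating at $x_0$. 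When $Z$ is tuned to the weight structure at $x_0$, this accumulation encodes a countably-parametrized neighborhood filter at $x_0$ from its complement, incompatible with $x_0$ having uncountable character. I expect the main obstacle to be precisely this last step: matching $Z$ to the spectral data and to the character of $x_0$ sharply enough that the 0-soft lift genuinely witnesses countable character at $x_0$ in $X$, so that the uncountable character assumption produces a clean contradiction rather than a compatible configuration.
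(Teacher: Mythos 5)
Your opening reductions (Theorem \ref{softOG} plus Corollary \ref{OG} to get $X$ openly generated, the spectral theorem and Theorem~A to get a cofinal family of 0-soft limit squares, and Lemma \ref{noniz} to push a point of uncountable character down to a non-isolated point in some metrizable factor) match the paper exactly. But from that point on your argument has a genuine gap, and the concluding step would fail as stated. A countable net (or sequence) from $X\setminus\{x_0\}$ accumulating at $x_0$ is in no way incompatible with $x_0$ having uncountable character: every point of $[0,1]^{\omega_1}$ or $D^{\omega_1}$ has uncountable character yet is the limit of convergent sequences from its complement. So even if your lifting problem over the unspecified $Z$ could be set up and solved, the data you extract from the lift (density $-1$ at points of $p_\alpha^{-1}(z_n)$, convergence of $\psi_2$ to $\delta_{x_0}$) does not contradict anything. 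You also never construct $Z$, $\psi_1$, $\psi_2$; ``tuned to the uncountable character at $x_0$'' is not a construction, and you acknowledge this yourself.

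The paper's actual contradiction lives elsewhere and uses only \emph{openness} of a characteristic map, not a genuine 0-soft lifting problem. The key structural move you are missing is to introduce a \emph{second} metrizable level: since $p_\alpha^{-1}(y)$ is non-degenerate ($X_\alpha$ is metrizable but $X$ is not... more precisely, the fiber over $y$ is not a singleton), there is $\beta\ge\alpha$ with $(p^\beta_\alpha)^{-1}(y)$ containing two points $x_1\ne x_2$. The characteristic map $\chi$ of the square $I^2(X_\beta)\to I^2(X_\alpha)$ over $I(X_\beta)\to I(X_\alpha)$ is open, being a left divisor of the open characteristic map of the limit square. One then tests openness at the single point $\delta_{\delta_{x_1}}\oplus\delta_{\delta_{x_2}}$, whose $\chi$-image is $(\delta_{x_1}\oplus\delta_{x_2};\delta_{\delta_y})$, against the approximating pairs $(\delta_{x_1}\oplus\delta_{x_2}\oplus\delta_{x_i},\delta_{\delta_y\oplus\delta_{y_i}})$ with $y_i\to y$, $y_i\ne y$. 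Lemma \ref{supp} confines any preimage $\M_i$ to $I^2(\{x_1,x_2,x_i\})$, and Lemma \ref{supp+} (via preservation of preimages) forces $\M_i$ into $I(S)$ where every measure in $S$ has full mass at $x_i$, hence $\M_i$ stays uniformly away from any neighborhood of $\delta_{\delta_{x_1}}\oplus\delta_{\delta_{x_2}}$ separating $\delta_{x_1}$ from $S$. That is the contradiction; it is a concrete failure of openness at the metrizable level, not an incompatibility between a lift and the character of $x_0$.
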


\begin{proof} Theorem \ref{softOG} implies that $IX$ is openly generated. Then $X$ is openly generated by Corollary \ref{OG}.

Suppose that $X$ is non-metrizable.  Present $X$ as a limit of an
$\omega$-system $S_X=\{X_\alpha,p_\alpha,\A\}$ with open surjective maps $p_\alpha$. If
the map $b_{IX}:I^2(X)\to IX$ is 0-soft, then, using the spectral theorem of
E.V.~Shchepin \cite{Shchep} and theorem A, we obtain that there exists a
closed cofinal subset $\B\subset\A$ such that for each $\alpha\in\B$ the
diagram
$$ \CD I^2(X)     @>I^2(p_\alpha)>>    I^2(X_\alpha)  \\
@VVb_{IX}V @VVb_{I(X_\alpha)}V
\\ IX       @>I(p_\alpha)>>         I(X_\alpha) \endCD $$
is 0-soft and therefore open.

Since $X$ is non-metrizable, there exists a point $x\in X$ with uncountable character. By Lemma \ref{noniz}, there exists $\alpha\in\B$ such that the point $y=p_\alpha(x)$ is non-isolated. Since $X_\alpha$ is metrizable, the set $p_\alpha^{-1}(y)$ is not a one-point set. Then there exists $\beta\in\A$ such that $\beta\ge\alpha$ and the set $(p^\beta_\alpha)^{-1}(y)$ is not a one-point set. The characteristic map $\chi:I^2(X_\beta)\to I(X_\beta)\times_{X_\alpha}I^2(X_\alpha)$ of the diagram
$$ \CD I^2(X_\beta)     @>I^2(p^\beta_\alpha)>>    I^2(X_\alpha)  \\
@VVb_{I(X_\beta)}V @VVb_{I(X_\alpha)}V
\\ I(X_\beta)       @>I(p^\beta_\alpha)>>         I(X_\alpha) \endCD $$
is open being a left divisor of the open map $(I(p_\beta)\times\id_{I^2(X_\alpha)}|_{(I(X_\beta)\times_{X_\alpha}I^2(X_\alpha))})\circ \chi'$ where $\chi'$ is the characteristic map of the diagram
$$ \CD I^2(X)     @>I^2(p_\alpha)>>    I^2(X_\alpha)  \\
@VVb_{IX}V @VVb_{I(X_\alpha)}V
\\ IX       @>I(p_\alpha)>>         I(X_\alpha). \endCD $$

Choose two distinct point $x_1$ and $x_2\in(p^\beta_\alpha)^{-1}(y)$. Consider $\delta_{\delta_{x_1}}\oplus\delta_{\delta_{x_2}}\in I^2(X_\beta)$. Then we have $\chi(\delta_{\delta_{x_1}}\oplus\delta_{\delta_{x_2}})=(\delta_{x_1}\oplus\delta_{x_2};\delta_{\delta_{y}})$.

Choose any converging to $y$ sequence $(y_i)$ such that $y_i\neq y$ for each $i\in \N$. Since the map $p^\beta_\alpha$ is open, there exists a sequence $(x_i)$ converging to $x_2$ such that  $p^\beta_\alpha(x_i)=y_i$.    Then the sequence $\delta_{x_1}\oplus\delta_{x_2}\oplus\delta_{x_i}$ converges to $\delta_{x_1}\oplus\delta_{x_2}$ and the sequence $\delta_{\delta_y\oplus\delta_{y_i}}$ converges to $\delta_{\delta_{y}}$. Moreover, $I(p^\beta_\alpha)(\delta_{x_1}\oplus\delta_{x_2}\oplus\delta_{x_i})=\delta_y\oplus\delta_{y_i}
=b_{I(X_\alpha)}(\delta_{\delta_y\oplus\delta_{y_i}})$, hence we have $(\delta_{x_1}\oplus\delta_{x_2}\oplus\delta_{x_i},\delta_{\delta_y\oplus\delta_{y_i}})\in
I(X_\beta)\times_{X_\alpha}I^2(X_\alpha)$ for each $i\in \N$.

 Consider any $\M_i\in\chi^{-1}(\delta_{x_1}\oplus\delta_{x_2}\oplus\delta_{x_i},\delta_{\delta_y\oplus\delta_{y_i}})$. Since $\M_i\in (b_{IX})^{-1}(\delta_{x_1}\oplus\delta_{x_2}\oplus\delta_{x_i})$, we obtain $\M_i\in I^2(\{x_1,x_2,x_i\})$ by Lemma \ref{supp}. Since $\M_i\in (I^2(p^\beta_\alpha))^{-1}(\delta_{\delta_y\oplus\delta_{y_i}})$ and the functor $I$ preserves preimages, we have $\M_i\in IS$ where $S=\{\nu\in I(\{x_1,x_2,x_i\})|\nu=s\odot \delta_{x_1}\oplus t\odot\delta_{x_2}\oplus\delta_{x_i}$ where $t,s\in [-\infty,0]$ with $s\oplus t=0\}$ by Lemma \ref{supp+}.

Choose a function $\varphi\in C(X_\beta)$ such that $\varphi(x_1)=0$ and $\varphi(x_2)>1$. We can assume that $\varphi(x_i)>1$ for each  $i\in \N$. Consider open sets $O_1=\{\nu\in I(X_\beta)|\nu(\varphi)<\frac{1}{3}\}$ and $O_2=\{\nu\in I(X_\beta)|\nu(\varphi)>\frac{2}{3}\}$. Then we have $\delta_{x_1}\in O_1$,  $S\subset O_2$ and $\Cl O_1\cap \Cl O_2$. Choose a function $\psi\in C(I(X_\beta))$ such that $\psi(O_1)\subset\{1\}$ and $\psi(O_2)\subset\{0\}$. Then we have $\delta_{\delta_{x_1}}\oplus\delta_{\delta_{x_2}}(\psi)=1$ and $\M_i(\psi)=0$ for each  $i\in \N$. We obtain a contradiction to openness of $\chi$.  The theorem is proved.
\end{proof}


Let us remark that an analogous theorem for probability measures  was proved in \cite{Radul2}. Fedorchuk proved in \cite{Fed1} that  each product of $\omega_1$ barycentrically open convex metrizable compacta  is barycentrically soft. The following theorem demonstrates that the situation is different in the case of idempotent probability measures.

\begin{theorem}\label{softpr} The map $\beta_{I([0,1]^{\omega_1})}$ is not 0-soft.
\end{theorem}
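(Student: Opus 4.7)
The plan is to derive Theorem \ref{softpr} as an immediate consequence of Theorem \ref{softmetr}, which has already done all of the heavy lifting. The only input that I need beyond that theorem is the elementary fact that the Tychonov cube $[0,1]^{\omega_1}$ is non-metrizable, since it has weight $\omega_1>\omega$.

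The argument proceeds by contradiction. Assume that the idempotent barycenter map $\beta_{I([0,1]^{\omega_1})}:I^2([0,1]^{\omega_1})\to I([0,1]^{\omega_1})$ is 0-soft. Apply Theorem \ref{softmetr} with $X=[0,1]^{\omega_1}$: the hypothesis of that theorem is precisely the 0-softness of $\beta_{IX}$, and its conclusion forces $X$ to be metrizable. Since $[0,1]^{\omega_1}$ is not metrizable, this is a contradiction, and hence $\beta_{I([0,1]^{\omega_1})}$ is not 0-soft.

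There is essentially no obstacle at this stage of the paper, because the combinatorial and topological difficulties (the spectral reduction, Lemma \ref{noniz}, the disjoint-support argument based on Lemma \ref{supp} and Lemma \ref{supp+}, and the reflection Lemma \ref{OD}) were already absorbed into the proof of Theorem \ref{softmetr}. The role of the present theorem is to record the contrast with Fedorchuk's probability-measure result from \cite{Fed1}: while the product of $\omega_1$ barycentrically open convex metrizable compacta is barycentrically soft in the linear setting, the analogue fails dramatically for idempotent measures already at weight $\omega_1$ and already for the weakest form of softness.
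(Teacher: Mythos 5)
Your one-line deduction is logically valid for the statement exactly as printed: Theorem \ref{softmetr} applied to $X=[0,1]^{\omega_1}$ does show that $\beta_{IX}\colon I^2([0,1]^{\omega_1})\to I([0,1]^{\omega_1})$ cannot be $0$-soft. But the subscript in the printed statement is evidently a misprint, and the theorem the paper actually proves is a different and substantially stronger one: the cube $[0,1]^{\omega_1}$ itself is not I-barycentrically soft, i.e.\ the map $\beta_{[0,1]^{\omega_1}}\colon I([0,1]^{\omega_1})\to [0,1]^{\omega_1}$ is not $0$-soft. You can see this both from the paper's proof --- the spectral reduction there produces a square whose vertical arrows are $b_{[0,1]^{\omega}\times[0,1]^{\omega}}$ and $b_{[0,1]^{\omega}}$, the barycentre maps of the cubes themselves rather than of their $I$-images --- and from the sentence introducing the theorem, which presents it as the failure of the idempotent analogue of Fedorchuk's result that a product of $\omega_1$ barycentrically open convex metrizable compacta is barycentrically soft. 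The relevant product is $[0,1]^{\omega_1}$, and ``I-barycentrically soft'' for that product means softness of $\beta_{[0,1]^{\omega_1}}$. Your own closing paragraph states precisely this intended content, which your argument does not deliver.

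The intended statement is not a consequence of Theorem \ref{softmetr}: that theorem constrains only barycentre maps of compacta of the form $IX$, and $[0,1]^{\omega_1}$ is not given as such a compactum. A genuinely new argument is required, and the paper supplies one: reduce via the spectral theorem and Theorem A to the openness of the characteristic map $\chi$ of the square over the projection $p\colon[0,1]^{\omega}\times[0,1]^{\omega}\to[0,1]^{\omega}$; take the measure $\delta_{(\overline{0};\overline{0})}\oplus\delta_{(\overline{1};\overline{1})}$, whose image under $\chi$ is $((\overline{1};\overline{1});\delta_{\overline{0}}\oplus\delta_{\overline{1}})$; approximate that image by the pairs $((\overline{1};\overline{1-\frac{1}{i}}),\ \delta_{\overline{0}}\oplus(-\frac{1}{i})\odot\delta_{\overline{1}})$; and use Lemma \ref{supp+} together with the density computation to show that every $\chi$-preimage of such a pair is uniformly far from $\delta_{(\overline{0};\overline{0})}\oplus\delta_{(\overline{1};\overline{1})}$, contradicting openness. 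A useful heuristic for future reference: when a theorem would follow in one line from the immediately preceding theorem, yet the author writes a page of new computation, the safe conclusion is that the statement has been misread.
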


\begin{proof} Suppose the contrary. Then using the same arguments as in the proof of Theorem \ref{softmetr} we obtain that the diagram

$$ \CD I([0,1]^{\omega}\times[0,1]^{\omega})     @>I(p)>>    I([0,1]^{\omega})  \\
@VVb_{[0,1]^{\omega}\times[0,1]^{\omega}}V @VVb_{[0,1]^{\omega}}V
\\ [0,1]^{\omega}\times[0,1]^{\omega}       @>p>>         [0,1]^{\omega} \endCD $$
is open (by $p:[0,1]^{\omega}\times[0,1]^{\omega}\to[0,1]^{\omega}$ we denote the natural projection to the second coordinate). As before by $\chi$ we denote the characteristic map.

For $t\in [0,1]$ we put $\overline{t}=(t,0,0,\dots)\in [0,1]^{\omega}$. Consider $\delta_{(\overline{0};\overline{0})}\oplus\delta_{(\overline{1};\overline{1})}\in I([0,1]^{\omega}\times[0,1]^{\omega})$. Then we have $\chi(\delta_{(\overline{0};\overline{0})}\oplus\delta_{(\overline{1};\overline{1})})=
((\overline{1};\overline{1});\delta_{\overline{0}}\oplus\delta_{\overline{1}})$.

The sequence $(\overline{1};\overline{1-\frac{1}{i}})$ converges to $(\overline{1};\overline{1})$ and the sequence $\delta_{\overline{0}}\oplus(-\frac{1}{i})\odot\delta_{\overline{1}}$ converges to $\delta_{\overline{0}}\oplus\delta_{\overline{1}}$. Moreover, $((\overline{1};\overline{1-\frac{1}{i}}),\delta_{\overline{0}}\oplus(-\frac{1}{i})\odot\delta_{\overline{1}})\in
([0,1]^{\omega}\times[0,1]^{\omega})\times_{[0,1]^{\omega}}I([0,1]^{\omega})$ for each $i\in \N$.

 Consider any $\pi_i\in\chi^{-1}((\overline{1};\overline{1-\frac{1}{i}}),
 \delta_{\overline{0}}\oplus(-\frac{1}{i})\odot\delta_{\overline{1}})$.  Since $\pi_i\in (Ip)^{-1}(\delta_{\overline{0}}\oplus(-\frac{1}{i})\odot\delta_{\overline{1}})$, we have by Lemma \ref{supp+} that $\pi_i=\nu_i\oplus(-\frac{1}{i})\odot\mu_i$ where $\nu_i\in I([0,1]^{\omega}\times\{\overline{0}\})$ and  $\mu_i\in I([0,1]^{\omega}\times\{\overline{1}\})$ for each $i\in \N$. Since $\pi_i\in (b_{[0,1]^{\omega}\times[0,1]^{\omega}})^{-1}(\overline{1};\overline{1-\frac{1}{i}})$, we have $d_{\nu_i}(\overline{1};\overline{0})=1$ for each $i\in \N$ where $d_{\nu_i}$ is the density of $\nu_i$.

Choose a function $\varphi\in C([0,1]^{\omega}\times[0,1]^{\omega})$ such that $\varphi(\overline{1};\overline{0})=1$ and $\varphi(\overline{0};\overline{0})=\varphi(\overline{1};\overline{1})=0$.  Then we have $\delta_{(\overline{0};\overline{0})}\oplus\delta_{(\overline{1};\overline{1})}(\varphi)=0$ and $\pi_i(\varphi)\ge\nu_i(\varphi)\ge 1$ for each  $i\in \N$. We obtain a contradiction to openness of $\chi$.  The theorem is proved.
\end{proof}

\begin{question} If there exists a non-metrizable I-barycentrically soft compactum?
\end{question}

\end{document}